\newtheorem{introthm}{Theorem}
\newtheorem{introcor}[introthm]{Corollary}
\newtheorem{theorem}{Theorem}[section]
\newtheorem{proposition}[theorem]{Proposition}
\newtheorem{lemma}[theorem]{Lemma}
\newtheorem*{claim*}{Claim}
\theoremstyle{definition}
\newtheorem*{remark*}{Remark}
\newtheorem{definition}[theorem]{Definition}
\def\E{\mathbb{E}}
\def\IR{\mathbb{R}}
\def\IZ{\mathbb{Z}}
\def\eps{\varepsilon}
\def\ga{\gamma}
\def\Ga{\Gamma}
\def\si{\sigma}
\DeclareMathOperator{\Aut}{Aut}
\DeclareMathOperator{\dist}{dist}
\newcommand{\defeq}{\mathrel{\vcenter{\baselineskip0.5ex \lineskiplimit0pt
                     \hbox{\scriptsize.}\hbox{\scriptsize.}}}%
                     =}
\def\vertex{
\begin{tikzpicture}[baseline=-0.6ex]
\filldraw 
(0,0) circle (1.1pt);
\end{tikzpicture}
}
\def\edge{
\begin{tikzpicture}[baseline=-0.6ex]
\filldraw 
(0,-0.1) circle (1.1pt) -- (0,0.1) circle (1.1pt);
\end{tikzpicture}
}
\def\pthree{
\begin{tikzpicture}[baseline=-0.6ex]
\filldraw 
(0,0) circle (1.1pt) -- (0.2,0) circle (1.1pt) -- (0.4,0) circle (1.1pt) -- (0.6,0) circle (1.1pt);
\end{tikzpicture}
}
\def\distthree{
\begin{tikzpicture}[baseline=-0.6ex]
\filldraw (0,0) circle (1.1pt);
\draw (0.2,0) circle (1.1pt);
\draw (0.4,0) circle (1.1pt);
\filldraw (0.6,0) circle (1.1pt);
\draw[shorten <= 1.1pt, shorten >= 1.1pt] (0,0) -- (0.2,0);
\draw[shorten <= 1.1pt, shorten >= 1.1pt] (0.2,0) -- (0.4,0);
\draw[shorten <= 1.1pt, shorten >= 1.1pt] (0.4,0) -- (0.6,0);
\end{tikzpicture}
}
\def\sta{
\begin{tikzpicture}[baseline=-0.6ex]
\filldraw (0,0) circle (1.1pt) -- (0.2,0) circle (1.1pt) node[right, inner xsep=0pt, outer xsep=4pt]{\scriptsize{$d$}};
\filldraw (0,0) circle (1.1pt) -- (0.2,0.2) circle (1.1pt);
\filldraw (0,0) circle (1.1pt) -- (0.2,-0.2) circle (1.1pt); 
\end{tikzpicture}
}
\def\smallsta{
\begin{tikzpicture}[baseline=-0.4ex,scale=0.6, every node/.style={scale=0.6}]
\filldraw (0,0) circle (1.1pt) -- (0.2,0) circle (1.1pt) node[right, inner xsep=0pt, outer xsep=4pt]{\scriptsize{$d$}};
\filldraw (0,0) circle (1.1pt) -- (0.2,0.2) circle (1.1pt);
\filldraw (0,0) circle (1.1pt) -- (0.2,-0.2) circle (1.1pt); 
\end{tikzpicture}
}
\def\flower{
\begin{tikzpicture}[baseline=-0.6ex]
\draw (0,0) circle (1.1pt);
\filldraw (0.2,0) circle (1.1pt) node[right, inner xsep=0pt, outer xsep=4pt]{\scriptsize{$d$}};
\filldraw (0.2,0.2) circle (1.1pt);
\filldraw (0.2,-0.2) circle (1.1pt);
\draw[shorten <=1.1pt] (0,0) -- (0.2,0); 
\draw[shorten <=1.1pt] (0,0) -- (0.2,0.2); 
\draw[shorten <=1.1pt] (0,0) -- (0.2,-0.2); 
\end{tikzpicture}
}
\def\floweri{
\begin{tikzpicture}[baseline=-0.6ex]
\draw (0,0) circle (1.1pt);
\filldraw (0.2,0) circle (1.1pt) node[right, inner xsep=0pt, outer xsep=4pt]{\scriptsize{$i$}};
\filldraw (0.2,0.2) circle (1.1pt);
\filldraw (0.2,-0.2) circle (1.1pt);
\draw[shorten <=1.1pt] (0,0) -- (0.2,0); 
\draw[shorten <=1.1pt] (0,0) -- (0.2,0.2); 
\draw[shorten <=1.1pt] (0,0) -- (0.2,-0.2); 
\end{tikzpicture}
}
\def\floweriplus{
\begin{tikzpicture}[baseline=-0.6ex]
\draw (0,0) circle (1.1pt);
\filldraw (0.2,0) circle (1.1pt) node[right, inner xsep=0pt, outer xsep=4pt]{\scriptsize{$i+1$}};
\filldraw (0.2,0.2) circle (1.1pt);
\filldraw (0.2,-0.2) circle (1.1pt);
\draw[shorten <=1.1pt] (0,0) -- (0.2,0); 
\draw[shorten <=1.1pt] (0,0) -- (0.2,0.2); 
\draw[shorten <=1.1pt] (0,0) -- (0.2,-0.2); 
\end{tikzpicture}
}
\begin{document}

\title{Entropy inequalities for factors of IID}

\author[Backhausz]{\'{A}gnes Backhausz}
\address{ELTE E\"otv\"os Lor\'and University, Budapest, Hungary; Department of Probability Theory and Statistics; 
H-1117 Budapest, P\'azm\'any P\'eter s\'et\'any 1/c; 
and MTA Alfr\'ed R\'enyi Institute of Mathematics 
H-1053 Budapest, Re\'altanoda utca 13-15}
\email{agnes@math.elte.hu}

\author[Gerencs\'{e}r]{Bal\'{a}zs Gerencs\'{e}r}
\address{MTA Alfr\'ed R\'enyi Institute of Mathematics 
H-1053 Budapest, Re\'altanoda utca 13-15;
and ELTE E\"otv\"os Lor\'and University, Budapest, Hungary; Department of Probability Theory and Statistics 
H-1117 Budapest, P\'azm\'any P\'eter s\'et\'any 1/c}
\email{gerencser.balazs@renyi.mta.hu}

\author[Harangi]{Viktor Harangi}
\address{MTA Alfr\'ed R\'enyi Institute of Mathematics 
H-1053 Budapest, Re\'altanoda utca 13-15}
\email{harangi@renyi.hu}

\thanks{The first author was supported by 
the MTA R\'enyi Institute ``Lend\"ulet'' Limits of Structures Research Group 
and by the ``Bolyai \"Oszt\"ond\'ij" grant of the Hungarian Academy of Sciences.
The second author was supported by 
NKFIH (National Research, Development and Innovation Office) grant PD 121107. 
The third author was supported 
by Marie Sk{\l}odowska-Curie Individual Fellowship grant no.\ 661025 
and the MTA R\'enyi Institute ``Lend\"ulet'' Groups and Graphs Research Group.}

\keywords{factor of IID, factor of Bernoulli shift, entropy inequality, regular tree, tree-indexed Markov chain}

\subjclass[2010]{37A35, 60K35, 37A50, 05E18}
% valami (random) liftes grafos???
% 37A35 Dynamical systems and ergodic theory: Entropy and other invariants, isomorphism, classification
% 60K35 Probability: Interacting random processes; statistical mechanics type models; percolation theory
% 37A50 Dynamical systems and ergodic theory: Relations with probability theory and stochastic processes
% 05E18 Group actions on combinatorial structures

%\date{}

%
\begin{abstract}
This paper is concerned with certain invariant random processes 
(called factors of IID) on infinite trees. 
Given such a process, one can assign entropies to different finite subgraphs of the tree. 
There are linear inequalities between these entropies 
that hold for any factor of IID process 
(e.g.~``edge versus vertex'' or ``star versus edge''). 
These inequalities turned out to be very useful: 
they have several applications already, 
the most recent one is the Backhausz--Szegedy result 
on the eigenvectors of random regular graphs. 

We present new entropy inequalities in this paper. 
In fact, our approach provides a general ``recipe'' 
for how to find and prove such inequalities. 
Our key tool is a generalization of the edge-vertex inequality 
for a broader class of factor processes with fewer symmetries. 
\end{abstract}

\maketitle

%%%%%%%%%%%%%%%%%%%%%%%%%%%%%%%%%%%%%%%%%%%%%%%%%%%%%%%%%%%%%%%%%%%%%%%%%%%%%%%%%%%%%%%%%%%%%
%%                                                                                         %%
%%        INTRODUCTION                                                                     %%
%%                                                                                         %%
%%%%%%%%%%%%%%%%%%%%%%%%%%%%%%%%%%%%%%%%%%%%%%%%%%%%%%%%%%%%%%%%%%%%%%%%%%%%%%%%%%%%%%%%%%%%%

\section{Introduction}

\subsection{Entropy inequalities for processes on $T_d$}

For an integer $d \geq 3$ let $T_d$ denote the $d$-regular tree:
the (infinite) connected graph with no cycles 
and with each vertex having exactly $d$ neighbors. 

The main focus of this paper is the class of \emph{factor of IID processes}. 
Loosely speaking, independent and identically distributed
(say uniform $[0,1]$) random labels are assigned to the vertices of $T_d$,
then each vertex gets another label 
(a \emph{state} chosen from a finite \emph{state space} $M$) 
that depends on the labeled rooted graph as seen from that vertex,
all vertices ``using the same rule''. 
This way we get a probability distribution on $M^{V(T_d)}$ (called a factor of IID) 
that is invariant under the \emph{automorphism group} $\Aut(T_d)$ of $T_d$. 
A formal definition will be given in Section \ref{sec:1_2} below. 

One of the reasons why factor of IID processes have attracted a growing attention
in recent years is that they give rise to randomized local algorithms 
that can be carried out on arbitrary regular graphs with ``large essential girth'',
e.g.\ random regular graphs. See \cite{endreuj, E, V, hoppen} how 
factors of IID/local algorithms can be used to obtain 
large independent sets for large-girth graphs. 
Factors of IID are also studied by ergodic theory 
under the name of \emph{factors of Bernoulli shifts}, 
see Section \ref{sec:dynamical_systems} for details. 

The starting point of our investigations is 
the following \emph{edge-vertex entropy inequality} 
that holds for any factor of IID process on $T_d$: 
\begin{equation} \label{eq:original}
\frac{d}{2} H( \edge) \geq (d-1) H( \vertex) .
\end{equation}
Here $\vertex$ represents a vertex, and $H(\vertex)$ is the (Shannon) entropy 
of the (random) state of a vertex. 
Similarly, $\edge$ represents an edge, 
and $H(\edge)$ stands for the entropy of 
the joint distribution of the states of two neighbors. 
(Note that the state space $M$ is assumed to be finite here.) 
This inequality can be found implicitly in Lewis Bowen's work from 2009 \cite{bowen}. 
Rahman and Vir\'ag proved it in a special setting \cite{mustazeebalint}. 
A full and concise proof was given by Backhausz and Szegedy 
in \cite{invtree}; see also \cite{mustazee}. 
The counting argument behind this inequality 
actually goes back to a result of Bollob\'as 
on the independence ratio of random regular graphs \cite{bollobas}. 

A \emph{star-edge entropy inequality} was also proved in \cite{invtree}:
\begin{equation} \label{eq:star-edge}
H\left( \sta \right) \geq \frac{d}{2} H\left( \edge \right) ,
\end{equation}
where $H(\sta)$ denotes the entropy of the joint distribution 
of the states of a vertex and its $d$ neighbors. 
(Note that because of the $\Aut(T_d)$-invariance 
the distribution of every vertex/edge/star is the same.) 
%In fact, these inequalities are true 
%for a broader class of processes called \emph{typical} processes. 

The above inequalities played a central role 
in a couple of intriguing results recently: 
the Rahman--Vir\'ag result \cite{mustazeebalint} 
about the maximal size of a factor of IID independent set on $T_d$
and the Backhausz--Szegedy result \cite{ev}
on the ``local statistics'' of eigenvectors of random regular graphs. 

The goal of this paper is to obtain further inequalities 
between the entropies corresponding to different subgraphs of $T_d$. 
The ultimate goal would be to somehow describe the class 
of (linear) entropy inequalities that hold for any factor of IID process. 
We make progress towards this goal in this paper 
by developing a general method that can be 
used to find and prove such inequalities. 
See Section \ref{sec:1_3} for some examples 
of the new inequalities that this method produces. 
These examples include an upper bound for the (normalized) mutual information 
of two vertices at distance $k$. 
Another inequality we obtain is $H(\flower) \geq (d-1) H(\vertex)$, 
where $\flower$ represents the $d$ neighbors of any given vertex in $T_d$. 
This inequality can be used to improve earlier results 
about \emph{tree-indexed Markov chains}, 
see Section \ref{sec:Markov} for details. 

\subsection{General edge-vertex entropy inequalities} 
\label{sec:1_2}
Our key tool is a generalization of 
the edge-vertex inequality \eqref{eq:original} 
for processes with weaker invariance properties. 
For a given finite connected simple graph $G$ (that is not a tree itself) 
the universal cover is an infinite ``periodic'' tree $T$. 
Let $\Ga$ be a subgroup of the automorphism group $\Aut(T)$. 
By a \emph{$\Ga$-invariant process} over (the vertex set $V(T)$ of) $T$ 
we mean a probability distribution on $M^{V(T)}$ 
that is invariant under the natural $\Ga$-action. 
Although this makes sense for any measurable space $M$, 
in this paper the \emph{state space} $M$ will always be 
a finite set (with the discrete $\si$-algebra). 

Now we define factors of IID in this more general setting. 
A measurable function $F \colon [0,1]^{V(T)} \to M^{V(T)}$ 
is said to be a \emph{$\Ga$-factor} if it is $\Ga$-equivariant, that is,
it commutes with the natural $\Ga$-actions. 
Given an IID process $Z = \left( Z_v \right)_{v \in V(T)}$
on $[0,1]^{V(T)}$, applying $F$ yields a factor of IID process $X = F(Z)$,
which can be viewed as a collection $X = \left( X_v \right)_{v \in V(T)}$ of
$M$-valued random variables. It follows immediately %from the definition
that the distribution of $X$ is indeed $\Ga$-invariant. 

In the special case when the degree of each vertex of $G$ is the same 
(that is, when $G$ is \emph{$d$-regular} for some $d$) 
the universal cover is the $d$-regular tree $T_d$. 
If we simply say \emph{factor of IID process on $T_d$} 
(without specifying the group $\Ga$), 
we usually refer to the case when $\Ga$ is the full automorphism group $\Aut(T_d)$. 
The edge-vertex inequality \eqref{eq:original} holds 
for any $\Aut(T_d)$-factor of IID process. 
The next theorem and its corollary provide generalizations of \eqref{eq:original} 
for certain subgroups $\Ga$ of $\Aut(T_d)$. 
\begin{introthm} \label{thm:general}
Suppose that $G$ is a finite connected (simple) graph, $T$ is the universal cover of $G$, 
$\varphi \colon T \to G$ is an arbitrary fixed covering map. 
By $\Ga_\varphi \leq \Aut(T)$ we denote the group of \emph{covering transformations}, 
that is, the automorphisms $\ga \in \Aut(T)$ for which $\varphi \circ \ga = \varphi$. 
Let $M$ be a finite state space and $X$ a $\Ga_\varphi$-factor of IID process on $M^{V(T)}$. 
Given a vertex $v$ of the base graph $G$ let $\mu^X_v$ denote 
the distribution of $X_{\hat{v}}$ for any lift $\hat{v}$ of $v$. 
Similarly, for an edge $e \in E(G)$ let $\mu^X_e$ be the joint distribution 
of $(X_{\hat{u}}, X_{\hat{v}})$ for any lift $\hat{e} = (\hat{u}, \hat{v})$ of $e$. 
Note that these distributions are well defined 
because of the $\Ga_\varphi$-invariance of the process. 
Then the Shannon entropies of these distributions satisfy the following inequality:
\begin{equation} \label{eq:general}
\sum_{e \in E(G)} H(\mu^X_e) \geq \sum_{v \in V(G)} (\deg v - 1) H(\mu^X_v) ,
\end{equation}
where $\deg v$ is the degree (i.e.~number of neighbors) of the vertex $v$ in $G$. 
\end{introthm}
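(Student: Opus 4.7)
My plan is to adapt the proof of the original edge–vertex inequality in \cite{invtree} to this weaker symmetry setting, by combining the entropy chain rule on finite subtrees of $T$ with a reduction to finite covers of $G$ based on the residual finiteness of $\Gamma_\varphi$. A preliminary reduction, standard in this setting, replaces $X$ by a block factor $X = F(Z)$ of some finite radius $r$ with discretized IID $Z$; this is justified by the density of block factors among $\Gamma_\varphi$-factors of IID together with the continuity of Shannon entropy on a finite state space.

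The central tool is the chain-rule inequality on finite subtrees of $T$. For any finite connected $S \subset V(T)$, reading off $H(X_S)$ along a root-to-leaf ordering of the subtree $T[S]$ and bounding each conditional entropy by its Markov counterpart yields
\[
H(X_S) \;\leq\; \sum_{e \in E(T[S])} H(X_e) \;-\; \sum_{v \in S}\bigl(\deg_{T[S]}(v) - 1\bigr) H(X_v).
\]
By the $\Gamma_\varphi$-invariance of $X$, the terms $H(X_v)$ and $H(X_e)$ depend only on the projections $\varphi(v) \in V(G)$ and $\varphi(e) \in E(G)$, so this inequality combined with $H(X_S) \geq 0$ gives a weighted version of the target inequality whose coefficients are determined by how $S$ projects to $G$. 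To produce the uniform edge- and vertex-weights required by the theorem, I plan to exploit that $\Gamma_\varphi \cong \pi_1(G)$ is a free, hence residually finite, group: choose finite-index normal subgroups $\Gamma_n \trianglelefteq \Gamma_\varphi$ so that the covers $G_n := T/\Gamma_n$ have injectivity radius exceeding $2r$, and take $S_n \subset V(T)$ to be a connected lift of a spanning tree of $G_n$. Equivalently, one can simulate the block factor $F$ directly on the finite graph $G_n$ using an IID $Z^{(n)}$ on $V(G_n)$, obtaining a process $Y^{(n)}$ whose $r$-local marginals coincide with those of $X$ via the covering map; it then suffices to prove the analogous inequality on $G_n$ and divide by the index $[\Gamma_\varphi : \Gamma_n]$ to descend to $G$.

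The main obstacle is the non-tree structure of $G$. A lifted spanning tree of $G_n$ covers only $|V(G_n)| - 1$ edges, while the inequality on $G_n$ needs contributions from all $|E(G_n)|$; the resulting deficit of roughly $[\Gamma_\varphi : \Gamma_n] \cdot (|E(G)| - |V(G)|)$ edges is exactly the rank of $\pi_1(G_n)$ and does not disappear after normalization. Overcoming this requires essential use of the block-factor structure. My plan is either (i) to apply the tree chain rule to the coupled process $(Y^{(n)}, Z^{(n)})$ on $G_n$ and exploit the identity $H(Y^{(n)}, Z^{(n)}) = |V(G_n)| \cdot h$ (with $h$ the IID entropy per site) to pay for the missing edges, or (ii) to apply the subtree chain rule to a family of spanning-tree lifts with weights chosen so that each edge of $G_n$ is covered with uniform total weight, and use the slack from $H(X_S) \geq 0$ together with the bound that mutual information across a missing edge is controlled by the IID entropy on the overlap of the relevant $r$-neighborhoods. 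Making this combinatorial–entropy bookkeeping close so that the cycle-rank defect of $G$ exactly cancels is the technical heart of the proof.
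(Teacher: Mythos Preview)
Your proposal correctly identifies the reduction to block factors and the passage to finite covers $G_n$ of large girth, and your tree chain-rule inequality on a spanning tree of $G_n$ is valid. But the ``technical heart'' you postpone is exactly the step that does not go through with these tools. After the spanning-tree chain rule you need, for every non-tree edge $e=(u,v)$ of $G_n$, the additional contribution $H(Y_e)\ge H(Y_u)+H(Y_v)$, i.e.\ $I(Y_u;Y_v)\le 0$; this is of course false, so the deficit must be covered by the slack $H(Y^{(n)})$. That would require
\[
H(Y^{(n)}) \;\le\; \sum_{e\in E(G_n)} H(Y_e)\;-\;\sum_{v\in V(G_n)}\bigl(\deg_{G_n}(v)-1\bigr)H(Y_v),
\]
which is precisely the $G_n$-version of the theorem and fails for general (non--factor-of-IID) processes---e.g.\ all $Y_v$ equal to one random variable makes the right side negative whenever $G_n$ has a cycle. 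Your fix (ii) cannot rescue this: averaging over weighted spanning trees still only produces total edge weight $|V(G_n)|-1$, and bounding $I(Y_u;Y_v)$ by the IID entropy of the overlap of two $r$-balls gives a term of order $h$ per missing edge that does not cancel against anything. Fix (i), applying the chain rule to $(Y,Z)$, yields after cancelling the IID part only the conditional statement $\sum_{e\in T_n} H(Y_e\mid Z_e)\ge\sum_v(\deg_{T_n}(v)-1)H(Y_v\mid Z_v)$, which again sees only tree edges and does not recover the unconditional $H(Y_e)$'s. In short, no purely information-theoretic manipulation on a single deterministic cover produces the non-tree edge contributions.

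The paper's argument avoids the spanning-tree route entirely. It takes a \emph{random} $N$-fold lift $\hat G_N$ of $G$ (independent uniform matchings over each edge of $G$) and computes, via Stirling, that the expected number of $M$-colorings of $\hat G_N$ with prescribed local statistics $\mu_e$ is
\[
\exp\Bigl(N\Bigl(\sum_{e\in E(G)}H(\mu_e)-\sum_{v\in V(G)}(\deg v-1)H(\mu_v)+o(1)\Bigr)\Bigr).
\]
Here every edge of $G$ contributes automatically, because each edge carries its own random matching. On the other hand, projecting the block factor onto $\hat G_N$ (which has large essential girth whp) and applying Azuma shows that at least one coloring with statistics $\eps$-close to $\mu^X$ exists whp, so the expectation is $\ge 1-o(1)$ and the exponent is nonnegative. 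The randomness of the lift, not a chain rule, is what makes the cycle-rank defect disappear.
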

Compare this with the trivial upper bound 
$ \sum_{e \in E(G)} H( \mu^X_e ) \leq \sum_{v \in V(G)} \deg(v) H( \mu^X_v )$, 
where we have equality if and only if the states of two neighbors are independent. 
Thus the above theorem can be considered as a quantitative result 
as to ``how independent'' neighboring states are in a factor of IID process. 

We state the special case when $G$ is $d$-regular in a separate corollary. 
\begin{introcor} \label{cor:regular_case} 
Let $\varphi \colon T_d \to G$ be a covering map 
for a finite $d$-regular connected (simple) graph $G$ with $d \geq 3$. 
Using the notations of Theorem \ref{thm:general}, 
for any $\Ga_\varphi$-factor of IID process on $M^{V(T_d)}$ it holds that 
\begin{equation} \label{eq:regular_case}
\sum_{e \in E(G)} H( \mu^X_e ) \geq (d - 1) \sum_{v \in V(G)} H( \mu^X_v ) .
\end{equation}
\end{introcor}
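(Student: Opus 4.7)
The plan is to obtain Corollary \ref{cor:regular_case} as a direct specialization of Theorem \ref{thm:general}. First I would verify that the hypotheses of the theorem are met: since $G$ is a finite connected $d$-regular graph with $d \geq 3$, every vertex has degree at least $3$, which forces $G$ to contain cycles (by an easy leaf-counting argument) and hence $G$ is not a tree. Thus the universal cover $T$ really is the $d$-regular tree $T_d$, and for any covering map $\varphi \colon T_d \to G$ the group $\Gamma_\varphi$ of covering transformations is well-defined, so Theorem \ref{thm:general} applies to any $\Gamma_\varphi$-factor of IID process $X$ on $M^{V(T_d)}$.

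Applying the theorem directly yields
\begin{equation*}
\sum_{e \in E(G)} H(\mu^X_e) \;\geq\; \sum_{v \in V(G)} (\deg v - 1)\, H(\mu^X_v).
\end{equation*}
The final step is to exploit $d$-regularity: since $\deg v = d$ for every $v \in V(G)$, the weight $(\deg v - 1) = d-1$ is independent of $v$ and pulls out of the sum, producing exactly the claimed inequality \eqref{eq:regular_case}.

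As a sanity check, one recovers the original edge-vertex inequality \eqref{eq:original} from Corollary \ref{cor:regular_case} in the fully symmetric case where one takes $\Gamma_\varphi$ enlarged to $\Aut(T_d)$: then all vertex distributions $\mu^X_v$ coincide with the common distribution represented by $\vertex$ and all edge distributions $\mu^X_e$ coincide with the common distribution represented by $\edge$, so combining \eqref{eq:regular_case} with the handshake identity $|E(G)| = d|V(G)|/2$ reduces precisely to $\tfrac{d}{2}H(\edge) \geq (d-1)H(\vertex)$. There is no substantive obstacle in the corollary itself; all of its content sits in Theorem \ref{thm:general}, and the only verification needed for the $d$-regular case is the innocuous observation that a finite connected $d$-regular graph with $d \geq 3$ is not a tree.
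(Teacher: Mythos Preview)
Your proposal is correct and matches the paper's approach: the corollary is an immediate specialization of Theorem~\ref{thm:general} obtained by substituting $\deg v = d$ and factoring out $(d-1)$, and the paper does not even spell out a separate proof. Your additional verification that a finite connected $d$-regular graph with $d\geq 3$ cannot be a tree, and your sanity check recovering \eqref{eq:original}, are fine elaborations but not needed for the argument.
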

This essentially says that \eqref{eq:original} holds for $\Ga_\varphi$-factors 
if $H( \vertex)$ and $H( \edge)$ are replaced 
by the average of the entropies of different ``types'' of vertices/edges. 
(Note that the number of edges is equal to $d/2$ times the number of vertices.)
This means that the original edge-vertex entropy inequality \eqref{eq:original} 
for $\Aut(T_d)$-factors follows from \eqref{eq:regular_case} for any $d$-regular $G$. 
Indeed, given an $\Aut(T_d)$-factor, it is also a $\Ga_\varphi$-factor 
with the extra property that each vertex/edge has the same distribution.

Another special case of Corollary \ref{cor:regular_case} is a result of Lewis Bowen 
saying that the so-called \emph{$f$-invariant} is non-negative for factors of Bernoulli shifts,  
see Section \ref{sec:dynamical_systems}. 

We will prove Theorem \ref{thm:general} in Section \ref{sec:proof} 
by considering random finite lifts of the base graph $G$ 
and counting the (expected) number of $M$-colorings on these lifts with the property that 
the ``local statistics'' of the coloring is close to that of the process $X$. 

\subsection{New inequalities}
\label{sec:1_3}
As we have mentioned, if we apply Corollary \ref{cor:regular_case} 
to an $\Aut(T_d)$-factor, then we simply get the original version \eqref{eq:original}. 
Hence it appears, falsely, that these more general inequalitites cannot be used to obtain 
new results in the most-studied special case of $\Aut(T_d)$-factors. 

The point is that starting from an $\Aut(T_d)$-factor of IID process $Y$ on $T_d$, 
there are many ways to turn this into a $\Ga_\varphi$-factor $X$ 
because one can use the extra structure on $T_d$ 
given by a covering $\varphi \colon T_d \to G$. 
Then applying Corollary \ref{cor:regular_case} to this new process $X$ 
yields an inequality for the original process $Y$. 
We demonstrate this on the following simple example. 
Let $G=K_{d+1}$ be the complete graph on $d+1$ vertices which is clearly $d$-regular. 
Let $o$ denote a distinguished vertex of $G$. 
Given a $T_d \to G$ covering map $\varphi$, every vertex of $T_d$ is either a lift of $o$, 
or has a unique neighbor that is a lift of $o$ (see Figure \ref{fig1}). 
Suppose that $Y$ is an $\Aut(T_d)$-factor of IID on $T_d$, and set 
$$ X_{v} \defeq Y_{v'} 
\mbox{, where $v'$ is the unique vertex such that } \varphi(v')=o \mbox{ and } \dist(v,v') \leq 1. $$
\begin{figure}
\centering
\includegraphics[width=6cm]{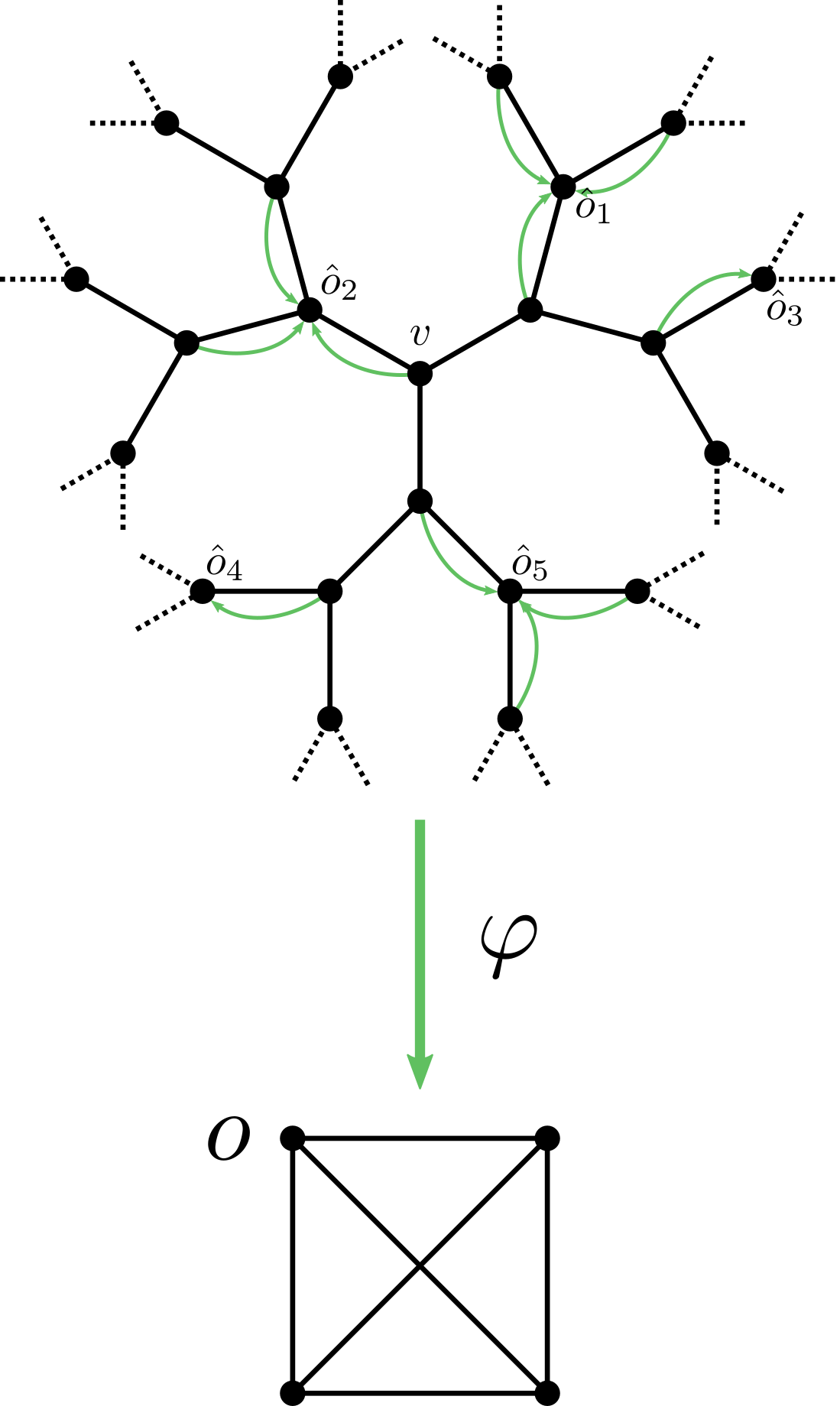}
\caption{The case $d=3$: $\hat{o}_i$ are the lifts of $o \in V(K_4)$; $X_v$ is defined as $Y_{\hat{o}_2}$.}
\label{fig1}
\end{figure}
It is easy to see that $X = \left( X_v \right)_{v \in V(T_d)}$ 
is a $\Ga_\varphi$-factor of IID and hence Corollary \ref{cor:regular_case} can be applied to $X$. 
Given two neighboring vertices $u$ and $v$ in $T_d$, 
the corresponding $u'$ and $v'$ either coincide 
(if $\varphi(u)=o$ or $\varphi(v)=o$), 
or they have distance $3$. 
It follows that 
$$ H(\mu^X_e) = \begin{cases}
H(\vertex) & \mbox{if the edge $e \in E(G)$ is incident to $o$,}\\
H(\distthree) & \mbox{otherwise,}
\end{cases} $$
where $\distthree$ represents two vertices of distance $3$, 
and the notations $H(\vertex)$ and $H(\distthree)$ refer to 
entropies corresponding to the ($\Aut(T_d)$-factor of IID) process $Y$.
Substituting these and $H(\mu^X_v) = H(\vertex)$ into \eqref{eq:regular_case} we obtain,
after cancellations, the following inequality for the process $Y$: 
$$ H(\distthree) \geq \left( 2 - \frac{2}{d(d-1)} \right) H(\vertex) .$$
This actually means that the normalized mutual information $I(Y_u; Y_v) / H(Y_v)$ 
is at most $\frac{2}{d(d-1)}$ for any vertices $u$ and $v$ of distance $3$ in $T_d$. 
The above argument can be generalized to obtain the following bounds 
for the normalized mutual information for arbitrary distance $\dist(u,v)=k$. 
A different proof for this result can be found in an earlier paper \cite{mut_inf} 
of the second and third author. 
\begin{introthm} \label{thm:mut_info}
\cite[Theorem 1]{mut_inf} 
Let $d\geq3$ be an integer. For any $u,v \in V(T_d)$ at distance $k$ 
and for any $\Aut(T_d)$-factor of IID process $Y$ on $T_d$ we have 
\begin{equation}
\frac{I(Y_u;Y_v)}{H(Y_v)} \leq \begin{cases}
\frac{2}{d(d-1)^l} & \mbox{ if $k=2l+1$ is odd,} \\
\frac{1}{(d-1)^l} & \mbox{ if $k=2l$ is even.}
\end{cases}
\end{equation}
\end{introthm}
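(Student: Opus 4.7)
The plan is to generalize the $G = K_{d+1}$ covering construction exhibited above for $k=3$. For each $k$ we will build a finite $d$-regular base graph $G$ together with a distinguished subset $S\subset V(G)$ whose preimage $\hat S:=\varphi^{-1}(S)$ under any covering $\varphi\colon T_d\to G$ forms a \emph{perfect covering code} of radius $\lfloor k/2\rfloor$ in $T_d$, with the additional property that whenever two codewords have adjacent Voronoi cells, they lie at distance exactly $k$ in $T_d$. Writing $\sigma(v)\in\hat S$ for the unique nearest codeword to $v$, set $X_v:=Y_{\sigma(v)}$. Since $\sigma$ is $\Gamma_\varphi$-equivariant, $X$ is a $\Gamma_\varphi$-factor of IID, and Corollary \ref{cor:regular_case} applies.

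The entropies in \eqref{eq:regular_case} then reduce to only two quantities. Every single-vertex marginal $\mu^X_v$ agrees with that of $Y$, contributing $H(\vertex)$ per $v\in V(G)$. Each edge of $G$ is either \emph{internal} to a Voronoi cell -- in which case $X_u = X_v$ and $H(\mu^X_e) = H(\vertex)$ -- or a \emph{bridge} between two cells, contributing the joint entropy $H_k$ of two $Y$-values at distance exactly $k$ (well-defined by the $\Aut(T_d)$-invariance of $Y$). Substituting into \eqref{eq:regular_case} yields a linear inequality in $H(\vertex)$ and $H_k$; translating via $I(Y_u;Y_v) = 2H(\vertex) - H_k$ produces the desired upper bound on the normalized mutual information.

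For odd $k=2\ell+1$, a single distinguished vertex $o$ suffices: $G$ is a $d$-regular graph whose ball of radius $\ell$ around $o$ induces the full $\ell$-ball of $T_d$, and whose remaining edges join the $d(d-1)^{\ell-1}$ sphere vertices so that in $T_d$ each such edge corresponds to a length-$(2\ell+1)$ geodesic between two $o$-lifts. One can realize such $G$ as a quotient $\Gamma\backslash T_d$ by a suitable finite-index subgroup $\Gamma\leq\Aut(T_d)$; the case $\ell=1$, $G=K_{d+1}$, is precisely the example worked out above. A routine count then gives $|B_\ell|-1$ internal edges and $\tfrac12 d(d-1)^\ell$ bridges per Voronoi cell, where $|B_\ell| = 1 + d + d(d-1) + \cdots + d(d-1)^{\ell-1}$. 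Plugging into \eqref{eq:regular_case}, dividing by the number of cells, and using $(d-2)|B_\ell| = d(d-1)^\ell - 2$ collapses the inequality to $H_k \geq \bigl(2 - \tfrac{2}{d(d-1)^\ell}\bigr) H(\vertex)$, equivalent to $I(Y_u;Y_v)/H(Y_v) \leq \tfrac{2}{d(d-1)^\ell}$. For even $k=2\ell$, one uses a pair of distinguished vertices at an appropriately chosen odd distance, so that $\hat S$ still forms a perfect $(\ell-1)$-code but the bridges have length $2\ell$; an analogous count produces the factor $1/(d-1)^\ell$.

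The main technical obstacle is constructing, for every $\ell$, a $d$-regular base graph with the required perfect-code and bridge-length structure; once such a graph is in hand, the rest of the argument is a mechanical substitution into Corollary \ref{cor:regular_case} together with a brief calculation.
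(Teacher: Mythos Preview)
Your odd-$k$ argument is correct and is essentially the paper's approach. The paper uses two copies of $T_{d,\ell}$ glued along their boundaries, whereas you use a single copy of $T_{d,\ell}$ with the leaves joined among themselves by a $(d-1)$-regular graph; both constructions yield the same inequality after the substitution into Corollary~\ref{cor:regular_case}, and your arithmetic checks out line by line. (Your one-cell variant is arguably cleaner: for $\ell=1$ it gives exactly $K_{d+1}$, and for general $\ell$ the existence of a $(d-1)$-regular simple graph on $d(d-1)^{\ell-1}$ vertices is immediate since $d(d-1)^{\ell}$ is always even.)

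The even case, however, contains an inconsistency. If $\hat S$ is a perfect $(\ell-1)$-code---meaning every vertex of $T_d$ lies in the $(\ell-1)$-ball of a unique codeword---then any bridge edge joins two vertices each at distance exactly $\ell-1$ from its codeword, and the concatenated geodesic has length $(\ell-1)+1+(\ell-1)=2\ell-1$, not $2\ell$. No choice of ``two distinguished vertices at odd distance'' repairs this while keeping uniform-radius Voronoi cells. The paper handles the even case by abandoning uniform radii: it takes one copy of $T_{d,\ell}$ together with $d-1$ copies of $T_{d,\ell-1}$, matching their boundaries so that each bridge runs from a depth-$\ell$ leaf to a depth-$(\ell-1)$ leaf, giving bridge length $\ell+1+(\ell-1)=2\ell$ as required. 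The $(d-1)$-to-$1$ ratio is forced by the degree constraint, since a depth-$\ell$ boundary has $d(d-1)^{\ell-1}$ vertices while a depth-$(\ell-1)$ boundary has $d(d-1)^{\ell-2}$. Once you adopt this mixed-radius construction, the rest of your mechanical substitution goes through and produces the bound $1/(d-1)^{\ell}$.
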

Our general method is described in Section \ref{sec:3}, 
it provides countless new entropy inequalities. 
We list a few examples in the rest of the introduction. 

Let us fix an $\Aut(T_d)$-factor of IID process $Y$. 
Then for a finite set $V \subset V(T_d)$ 
the entropy of the joint distribution of $Y_v$, $v \in V$, 
will be denoted by $H(V)$. 
Because of the $\Aut(T_d)$-invariance of the process 
this joint distribution, and hence $H(V)$, depends only 
on the ``isomorphism type'' of $V$ in $T_d$. 

For instance, if $V$ consists of the four vertices of a path of length three, 
then we do not need to specify where this path is in $T_d$ 
and we can simply write $H(\pthree)$ for $H(V)$. 
The next theorem compares $H(\pthree)$ to $H(\edge)$. 
\begin{introthm} \label{thm:path-edge}
The following \emph{path-edge inequality} holds for any $\Aut(T_d)$-factor of IID process on $T_d$:
$$
%H(\ptwo) \geq \frac{3d-4}{2d-2} H(\edge) \mbox{ and } 
H(\pthree) \geq \frac{2d-3}{d-1} H(\edge) .$$
\end{introthm}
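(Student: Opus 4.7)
The approach is to apply Corollary~\ref{cor:regular_case} to a $\Ga_\varphi$-factor of IID process $X$ derived from the original $\Aut(T_d)$-factor $Y$, following the general recipe used to derive Theorem~\ref{thm:mut_info}. The key new ingredient is a $\Ga_\varphi$-invariant perfect matching of $T_d$, which gives an equivariant way to single out a distinguished neighbor for every vertex.

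Concretely, I would let $G$ be any finite $d$-regular simple graph admitting a perfect matching $\mathcal{M} \subset E(G)$ (for instance $K_{d,d}$ works for every $d \geq 3$), fix a covering map $\varphi\colon T_d \to G$, and set $\widetilde{\mathcal{M}} := \varphi^{-1}(\mathcal{M})$. Because $\mathcal{M}$ is a perfect matching and $\varphi$ is a local bijection, $\widetilde{\mathcal{M}}$ is a $\Ga_\varphi$-invariant perfect matching of $T_d$; write $w(v)$ for the unique $\widetilde{\mathcal{M}}$-partner of $v$. Now define
\[ X_v := (Y_v, Y_{w(v)}). \]
Since $Y$ is an $\Aut(T_d)$-factor of IID (hence in particular a $\Ga_\varphi$-factor of IID) and $v\mapsto w(v)$ is $\Ga_\varphi$-equivariant, $X$ is a $\Ga_\varphi$-factor of IID.

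The entropy bookkeeping is then straightforward. For every vertex $v$ of $G$ and any lift $\hat v$, the pair $(Y_{\hat v}, Y_{w(\hat v)})$ records the joint distribution on an edge of $T_d$, so $H(\mu^X_v) = H(\edge)$. For $e \in E(G)$ with lift $(\hat u, \hat v)$ there are two cases: if $e \in \mathcal{M}$ then $w(\hat u) = \hat v$, so $X_{\hat u}$ and $X_{\hat v}$ carry the same edge information (in opposite orders) and $H(\mu^X_e) = H(\edge)$; if $e \notin \mathcal{M}$ then $w(\hat u) \neq \hat v$ and $w(\hat v) \neq \hat u$, and a short argument using that $T_d$ contains no triangle shows that $w(\hat u), \hat u, \hat v, w(\hat v)$ are four distinct vertices forming a path of length $3$, giving $H(\mu^X_e) = H(\pthree)$. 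With $n := |V(G)|$ the matching contributes $n/2$ edges and its complement $n(d-1)/2$ edges; substituting into \eqref{eq:regular_case} and dividing by $n/2$ yields
\[ H(\edge) + (d-1) H(\pthree) \;\geq\; 2(d-1) H(\edge), \]
which rearranges to the claimed path-edge inequality.

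The main hurdle is simply identifying the right construction: once the matching-based definition of $X$ is in hand, the coefficients in Corollary~\ref{cor:regular_case} automatically produce the ratio $(2d-3)/(d-1)$, and verifying the two entropy cases together with the $\Ga_\varphi$-equivariance of $X$ is routine. I do not anticipate technical obstacles beyond checking that a suitable base graph $G$ exists for every $d\geq 3$, which is handled uniformly by $K_{d,d}$.
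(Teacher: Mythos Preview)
Your proposal is correct and is essentially the same construction as the paper's: both set $X_v=(Y_v,Y_{w(v)})$ where $w(v)$ is the unique neighbor of $v$ singled out by a $\Ga_\varphi$-invariant perfect matching, and the resulting entropy bookkeeping ($H(\edge)$ on matching edges, $H(\pthree)$ on the remaining $(d-1)$-fraction of edges) is identical. The only cosmetic difference is the base graph: the paper uses the two-vertex multigraph with $d$ parallel edges and takes $e_1$ as the matching (invoking the multigraph extension of Corollary~\ref{cor:regular_case}), whereas you use a simple $d$-regular graph such as $K_{d,d}$, which lets Corollary~\ref{cor:regular_case} apply verbatim.
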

%
%If we combine \eqref{eq:original} and \eqref{eq:star-edge}, 
%then we get that $H(\sta) \geq (d-1) H(\vertex)$. 
%We show that this actually holds even if we remove 
%the central vertex from the star $\sta$: 
Another new inequality we obtain is 
\begin{equation} \label{eq:flower}
H(\flower) \geq (d-1) H(\vertex) .
\end{equation}
The following two theorems generalize this inequality in different ways.
\begin{introthm} \label{thm:sphere}
Let $S_k$ denote the set of vertices at distance $k$ from a fixed vertex of $T_d$. 
Then for any $\Aut(T_d)$-factor of IID process it holds that 
\begin{equation} \label{eq:sphere}
H(S_k) \geq (d-1)^k H(\vertex) .
\end{equation}
\end{introthm}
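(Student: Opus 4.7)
Plan: I would prove the sphere inequality by (strong) induction on $k$, with the trivial case $k=0$ and the flower inequality \eqref{eq:flower} giving the base $k=1$. For the inductive step, assume $H(S_j) \geq (d-1)^j H(\vertex)$ holds for all $j < k$; we must show $H(S_k) \geq (d-1)^k H(\vertex)$.

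A natural first move is to lift the flower inequality to a higher-order factor: given the $\Aut(T_d)$-factor of IID $Y$, form the $\Aut(T_d)$-factor $Y^{[k-1]}$ whose value at a vertex $v$ is the joint $Y$-configuration on the sphere $S_{k-1}(v)$, taken up to the action of the stabilizer $\St(v)$ on $S_{k-1}(v)$. This makes $Y \mapsto Y^{[k-1]}$ genuinely $\Aut(T_d)$-equivariant and, since $\St(v)$ only permutes the sphere vertices, loses no information: $H(Y^{[k-1]}_v) = H(S_{k-1})$. Applying the base flower inequality to $Y^{[k-1]}$ yields
\[
H\bigl(Y^{[k-1]}_{u_1}, \ldots, Y^{[k-1]}_{u_d}\bigr) \geq (d-1)\,H(S_{k-1}),
\]
where $u_1, \ldots, u_d$ are the $d$ neighbors of $v$. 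A short tree-parity calculation shows $\bigcup_{i=1}^d S_{k-1}(u_i) = S_{k-2}(v) \sqcup S_k(v)$, so the left-hand side equals $H(Y|_{S_{k-2} \cup S_k})$.

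The main obstacle is that subadditivity alone gives only the linear recurrence $H(S_k) + H(S_{k-2}) \geq (d-1)\,H(S_{k-1})$, whose characteristic roots $\tfrac{(d-1)\pm\sqrt{(d-3)(d+1)}}{2}$ are strictly smaller than $d-1$; iteration therefore cannot reach the sharp target $(d-1)^k H(\vertex)$. To close this gap I plan to apply Corollary \ref{cor:regular_case} in its full generality to a bespoke covering $\varphi \colon T_d \to G$: take $G$ to be a $d$-regular graph of girth greater than $2k$ carrying a distinguished vertex $o$ whose lifts in $T_d$ form an efficient $k$-dominating set (each $v \in V(T_d)$ has a unique nearest lift $v^* \in \varphi^{-1}(o)$ at distance $\leq k$), and define the derived $\Gamma_\varphi$-factor $X_v \defeq Y|_{S_k(v^*)}$. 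One then computes $\mu^X_e$ edge-class by edge-class: edges incident to $o$ contribute $H(S_k)$, since both endpoints share the same $v^*$, while the remaining edges contribute joint entropies of well-separated, disjoint copies of $S_k$. Collecting the edge and vertex terms in Corollary \ref{cor:regular_case} and using the inductive hypothesis to bound the well-separated contributions from above, the inequality should collapse directly to $H(S_k) \geq (d-1)^k H(\vertex)$, completing the induction.
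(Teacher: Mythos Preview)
Your proposal does not close the gap. You correctly observe that the recurrence $H(S_k)+H(S_{k-2})\geq (d-1)H(S_{k-1})$ is too weak, but the ``bespoke covering'' you sketch to fix this cannot yield the sphere inequality. With $X_v \defeq Y|_{S_k(v^*)}$ you have $H(\mu^X_v)=H(S_k)$ for \emph{every} vertex of $G$, so the right-hand side of Corollary~\ref{cor:regular_case} is $(d-1)\,|V(G)|\,H(S_k)$ and contains no $H(\vertex)$ term at all. On the left-hand side, every edge whose endpoints share the same $v^*$ contributes $H(S_k)$; the remaining ``crossing'' edges contribute $H\bigl(S_k(c)\cup S_k(c')\bigr)$ for centers $c,c'$ at distance $2k+1$. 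A short count (with $|V(G)|=|B_k|$ and $|B_k|-1$ within-ball edges) shows that Corollary~\ref{cor:regular_case} then reduces to the near-tautology $H\bigl(S_k(c)\cup S_k(c')\bigr)\gtrsim 2H(S_k)$, which combined with subadditivity forces approximate equality and says nothing about $H(\vertex)$. Your appeal to the inductive hypothesis is also in the wrong direction: $H(S_j)\geq (d-1)^j H(\vertex)$ is a \emph{lower} bound, whereas to extract information from Corollary~\ref{cor:regular_case} you would need an \emph{upper} bound on the crossing-edge entropies. (As a side remark, the earlier claim $H(Y^{[k-1]}_v)=H(S_{k-1})$ is also not justified: quotienting by the stabilizer action can strictly decrease entropy.)

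The paper's proof avoids induction entirely and is quite different in spirit. The base graph $G$ is constructed as $d$ copies of the finite tree $T_{d,k}$ glued along their common boundary $B$ (the level-$k$ sphere). To each boundary vertex one associates the trivial length-$0$ walk, so $H(\mu_v)=H(\vertex)$ there; to each interior vertex $v$ one associates the $|B|$ tree-paths to the boundary, whose lifted endpoints form exactly a copy of $S_k$ in $T_d$, so $H(\mu_v)=H(S_k)$. Since every edge lies inside one copy and the boundary endpoint (if any) is already included among the path endpoints, one gets $H(\mu_e)=H(S_k)$ for \emph{all} edges. Plugging into \eqref{eq:regular_case} and counting $|E(T_{d,k})|$, $|B|=d(d-1)^{k-1}$, and interior vertices then gives $H(S_k)\geq (d-1)^k H(\vertex)$ directly. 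The essential idea you are missing is that the base graph and the associated walks must be chosen so that both $H(\vertex)$ and $H(S_k)$ appear among the vertex entropies; this is what makes the two quantities comparable.
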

\begin{introthm} \label{thm:flower}
Let $\floweri$ denote the set of $i$ neighbors of a fixed vertex. 
Then for any $\Aut(T_d)$-factor of IID process and for any $1 \leq i < d$ it holds that 
$$ (d-i) H(\floweriplus) \geq (d-i-1) H(\floweri) + (d-1) H(\vertex),$$
and hence by induction for any $1\leq i \leq d$:
$$ H(\floweri) \geq \frac{id-2i+1}{d-1} H(\vertex), \quad  
\mbox{in particular, } 
H(\flower) \geq (d-1) H(\vertex) .$$
\end{introthm}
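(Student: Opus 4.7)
The plan is to obtain the recursive inequality as a direct instance of Corollary \ref{cor:regular_case}, applied to a $\Ga_\varphi$-factor of IID process $X$ derived from $Y$, with the complete bipartite graph $K_{d,d}$ serving as the base graph. The closed-form bound on $H(\floweri)$ (and the special case $H(\flower) \geq (d-1)H(\vertex)$) will then follow by an elementary induction on $i$.

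\textbf{Construction.} First I would fix a covering $\varphi \colon T_d \to K_{d,d}$ and label the two bipartition classes of the base graph by $U = \{u_1, \dots, u_d\}$ and $V = \{v_1, \dots, v_d\}$. For every $w \in V(T_d)$ with $\varphi(w) \in U$ and every $j \in \{1, \dots, d\}$, there is a unique neighbor $n_j(w)$ of $w$ with $\varphi(n_j(w)) = v_j$. Define
\[
X_w \;=\; \begin{cases} \bigl(Y_{n_1(w)}, Y_{n_2(w)}, \dots, Y_{n_i(w)}\bigr) & \text{if } \varphi(w) \in U, \\ Y_w & \text{if } \varphi(w) \in V. \end{cases}
\]
Since the local rule depends only on $\varphi$-labels (and $Y$ is $\Aut(T_d)$-equivariant, hence also $\Ga_\varphi$-equivariant), $X$ is a $\Ga_\varphi$-factor of IID to which Corollary \ref{cor:regular_case} applies.

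\textbf{Entropy computation and conclusion.} By the $\Aut(T_d)$-invariance of $Y$, the vertex entropies simplify to $H(\mu^X_{u_k}) = H(\floweri)$ and $H(\mu^X_{v_j}) = H(\vertex)$. For an edge $(u_k, v_j) \in E(K_{d,d})$, lift to $(\hat{u}_k, \hat{v}_j)$ with $\hat{v}_j = n_j(\hat{u}_k)$ and distinguish two cases: if $j \leq i$, then $Y_{\hat{v}_j}$ is already a coordinate of $X_{\hat{u}_k}$, so the joint entropy equals $H(\floweri)$; if $j > i$, then $X_{\hat{v}_j} = Y_{\hat{v}_j}$ extends the partial flower by one fresh neighbor of $\hat{u}_k$, yielding joint entropy $H(\floweriplus)$. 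Out of the $d^2$ edges of $K_{d,d}$, exactly $d \cdot i$ fall in the first case and $d(d-i)$ in the second. Substituting into Corollary \ref{cor:regular_case} and dividing by $d$ gives
\[
i\, H(\floweri) + (d-i)\, H(\floweriplus) \;\geq\; (d-1)\,H(\floweri) + (d-1)\,H(\vertex),
\]
which rearranges to the desired recursive inequality. The closed-form bound now follows by induction on $i$: the base case $i = 1$ reduces to $H(\vertex) \geq H(\vertex)$, and the inductive step amounts to a one-line check that $(d-i-1)(id-2i+1) + (d-1)^2 = (d-i)\bigl((i{+}1)d - 2(i{+}1) + 1\bigr)$.

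\textbf{Main obstacle.} The only real design question is to match the base graph and factor construction to the desired three-term linear combination. The bipartite structure of $K_{d,d}$ is the crucial ingredient, because every edge straddles the two classes: this forces the edges to split cleanly into a ``redundant'' type contributing $H(\floweri)$ and an ``extension'' type contributing $H(\floweriplus)$, with integer multiplicities $d \cdot i$ and $d(d-i)$ that hit exactly the coefficients needed. A non-bipartite base graph would introduce cross-terms such as $H(\edge)$ or joint entropies of partial flowers around distinct centers, which would not telescope into the stated inequality without additional auxiliary input.
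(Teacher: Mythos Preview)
Your proof is correct and follows essentially the same approach as the paper: both derive the recursive inequality by applying Corollary~\ref{cor:regular_case} to a $\Ga_\varphi$-factor built from $Y$ via a bipartite base graph, with the $U$-side carrying the $i$-flower and the $V$-side carrying the bare vertex. The only difference is the choice of base graph: the paper uses the two-vertex dipole with $d$ parallel edges (relying on the remark in Section~\ref{sec:non-simple} that multi-edges are permitted), whereas you use $K_{d,d}$, which is a $d$-fold simple cover of that dipole. Your version therefore stays within the simple-graph setting of Corollary~\ref{cor:regular_case} as literally stated, at the cost of every term being multiplied by $d$ and then divided out; the resulting inequality and the induction are identical.
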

We will see in Section \ref{sec:sharpness} that 
each of these inequalities is sharp in the sense that 
there are $\Aut(T_d)$-factors of IID processes for which 
the two sides of the inequality are asymptotically equal. 
We will also examine how strong our new inequalities are: 
it turns out that \eqref{eq:flower} and \eqref{eq:sphere} 
are stronger than \eqref{eq:original} and \eqref{eq:star-edge} 
for Markov chains indexed by $T_d$.  

\subsection*{Outline of the paper}
The rest of the paper is structured as follows.
In Section \ref{sec:2} we go through basic definitions and 
elaborate on the strength of Theorem \ref{thm:general} for different base graphs. 
In Section \ref{sec:new_inequalities} we describe our general method for deriving 
new entropy inequalities from our general edge-vertex inequalities. 
In Section \ref{sec:sharpness} we show that these new inequalities are sharp, 
and we compare them to previously-known ones. 
Finally, the proof of Theorem \ref{thm:general} is given in Section \ref{sec:proof}. 

\subsection*{Acknowledgments}
We are grateful to B\'alint Vir\'ag and M\'at\'e Vizer for fruitful discussions on the topic. 

%%%%%%%%%%%%%%%%%%%%%%%%%%%%%%%%%%%%%%%%%%%%%%%%%%%%%%%%%%%%%%%%%%%%%%%%%
%%%%%%%%%%%%%%%%%%%%%%%%%%%%%%%%%%%%%%%%%%%%%%%%%%%%%%%%%%%%%%%%%%%%%%%%%
%%%%%%%%%%%%%%%%%%%%%%%%%%%%%%%%%%%%%%%%%%%%%%%%%%%%%%%%%%%%%%%%%%%%%%%%%

\section{Preliminaries} \label{sec:2}

\subsection{Factors of IID}
\label{sec:defs}

Suppose that a group $\Ga$ acts on a countable set $S$.
Then $\Ga$ also acts on the space $M^S$ for a set $M$:
for any function $f \colon S \to M$ and for any $\ga \in \Ga$ let
\begin{equation} \label{eq:action}
(\ga \cdot f)(s) \defeq f( \ga^{-1} \cdot s) \quad \forall s \in S .
\end{equation}
First we define the notion of factor maps.
\begin{definition}
Let $M_1,M_2$ be measurable spaces and
$S_1,S_2$ countable sets with a group $\Ga$ acting on both.
A measurable mapping $F \colon M_1^{S_1} \to M_2^{S_2}$ is
said to be a \emph{$\Ga$-factor} if it is $\Ga$-equivariant,
that is, it commutes with the $\Ga$-actions.
\end{definition}

By an \emph{invariant process} on $M^S$ we mean an $M^S$-valued random variable 
(or a collection of $M$-valued random variables) 
whose (joint) distribution is invariant under the $\Ga$-action. 
For example, if $Z_s$, $s\in S_1$, are independent and identically distributed 
$M_1$-valued random variables, then we say that 
$Z = \left( Z_s \right)_{s \in S_1}$ is an IID process on $M_1^{S_1}$. 
Given a $\Ga$-factor $F \colon M_1^{S_1} \to M_2^{S_2}$,
we say that $X \defeq F(Z)$ is a \emph{$\Ga$-factor of the IID process $Z$}.
It can be regarded as a collection of $M_2$-valued random variables: 
$X = \left( X_s \right)_{s \in S_2}$. 

The results of this paper are concerned with factor of IID processes on infinite trees $T$: 
$S_1$ and $S_2$ are the vertex set $V(T)$ and $\Ga$ is a subgroup of the automorphism group $\Aut(T)$. 
The most important special case is $T=T_d$ and $\Ga = \Aut(T_d)$. 
When we say $\Ga$-factor of IID process, 
we should also specify which IID process we have in mind
(that is, specify $M_1$ and a probability distribution on it). 
By default we will work with the uniform distribution on $[0,1]$. 
In fact, as far as the class of $\Aut(T_d)$-factors is concerned, 
it does not really matter which IID process we consider. 
For example, for the uniform distribution on $\{0,1\}$ 
we get the same class of factors as for the uniform distribution on $[0,1]$. 
This follows from the fact that these two IID processes 
are $\Aut(T_d)$-factors of each other \cite{karen_ball}. 

The other important special case is when $T$ is the universal cover 
of a finite connected simple graph $G$ and $\Ga=\Ga_\varphi$ is the group of 
covering transformations for a covering $\varphi \colon T \to G$. 
In this case it holds that for any $\hat{v}_1,\hat{v}_2 \in V(T)$ 
with $\varphi(\hat{v}_1) = \varphi(\hat{v}_2)$ there exists a unique $\ga \in \Ga_\varphi$ 
such that $\ga( \hat{v}_1 ) = \hat{v}_2$. 
It follows that if we choose a fixed pre-image $\bar{v} \in \varphi^{-1}(v)$ 
for every vertex $v \in V(G)$ of the base graph, 
then a $\Ga_\varphi$-factor $F \colon [0,1]^{V(T)} \to M^{V(T)}$ is determined by 
the functions $f_{\bar{v}} \defeq \pi_{\bar{v}} \circ F \colon [0,1]^{V(T)} \to M$, 
where $\pi_{\bar{v}}$ denotes the coordinate projection $M^{V(T)} \to M$ 
corresponding to the vertex $\bar{v}$. 
Conversely, any collection of measurable functions $f_{\bar{v}} \colon [0,1]^{V(T)} \to M$, 
$v \in V(G)$, gives rise to a $\Ga_\varphi$-factor mapping. 
(Note that an $\Aut(T_d)$-factor $F$ is determined 
by a single function $f_{o} \defeq \pi_{o} \circ F  \colon [0,1]^{V(T)} \to M$, 
but in that case $f_o$ needs to be invariant under 
all automorphisms of $T_d$ fixing the vertex $o \in V(T_d)$. 
See \cite[Section 2.1]{one_ended_tail} for details.)

\subsection{Finite-radius factors} 
\label{sec:block_factors}
Let $X$ be a $\Ga$-factor of the IID process $Z$. 
We say that $X$ is a \emph{finite-radius factor} (or a \emph{block factor}) 
if there exists a positive integer $R$ such that 
for any vertex $v$ the value of $X_v$ depends only on 
the values $Z_u$ for vertices $u$ in the $R$-neighborhood around $v$. 

Can a factor of IID process be approximated by finite-radius factors? 
In many cases the answer is positive. This means that 
it suffices to prove certain statements for finite-radius factors. 
For instance, in the proof of Theorem \ref{thm:general} we will need the fact 
that an arbitrary $\Ga_\varphi$-factor of IID process is 
the weak limit of finite-radius $\Ga_\varphi$-factors. 
As we have seen, a $\Ga_\varphi$-factor $F$ is determined 
by finitely many measurable $[0,1]^{V(T)} \to M$ maps. 
The pre-image of an element $m \in M$ under such a map 
is a measurable set in the product space $[0,1]^{V(T)}$,  
and, as such, it can be approximated 
by a finite union of measurable cylinder sets. 
Since $M$ is finite in our case, it follows that 
any measurable $[0,1]^{V(T)} \to M$ map can be approximated 
by maps for which all the pre-images are finite unions of cylinder sets, 
and consequently any $\Ga_\varphi$-factor can be approximated by finite-radius factors. 

\subsection{Finite coverings}
\label{sec:finite_coverings}
Theorem \ref{thm:general} provides an inequality 
for any finite base graph $G$. Next we elaborate on 
how these inequalities are related to each other. 

Suppose that $G_1$ and $G_2$ are finite connected (simple) graphs 
such that there is a covering map $\psi \colon G_2 \to G_1$. 
Then the $G_2$-version of Theorem \ref{thm:general} is 
stronger than the $G_1$-version. 
Indeed, let $T$ denote their universal cover. 
Given a covering map $\varphi_2 \colon T \to G_2$, 
setting $\varphi_1 \defeq \psi \circ \varphi_2$ 
yields a $T \to G_1$ covering map, see Figure \ref{fig2}. 
Clearly $\Ga_{\varphi_2} \leq \Ga_{\varphi_1}$. 
It follows that any $\Ga_{\varphi_1}$-factor of IID process $X$ on $T$ 
is also $\Ga_{\varphi_2}$-factor with the extra property that 
$\mu^X_v$ ($\mu^X_e$) depends only on the $\psi$-image of $v \in V(G_2)$ ($e \in E(G_2)$). 
Therefore it is easy to see that if we take 
the $G_2$-version of the general edge-vertex inequality \eqref{eq:general} 
and apply it to a $\Ga_{\varphi_1}$-factor, 
we simply get back the $G_1$-version of \eqref{eq:general}. 

This means that one can get stronger and stronger versions of \eqref{eq:general} 
by repeatedly lifting the finite base graph $G$.
\begin{figure}
\centering
\includegraphics[width=6cm]{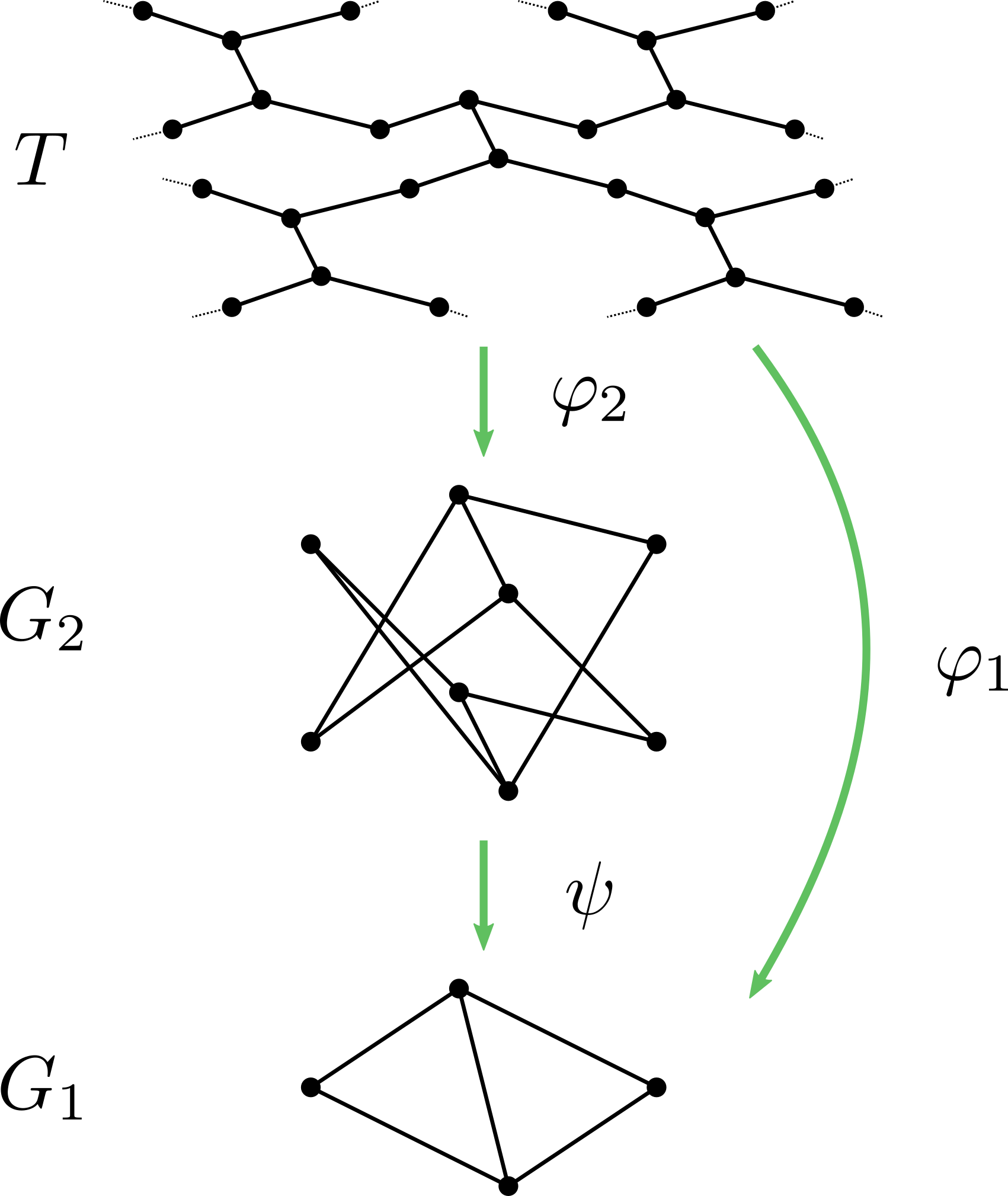}
\caption{Coverings $T \to G_2 \to G_1$}
\label{fig2}
\end{figure}

\subsection{Multiple edges and loops}
\label{sec:non-simple}
A graph is called \emph{simple} if it does not contain loops or multiple edges. 
For the sake of simplicity we stated (and we will prove) Theorem \ref{thm:general} 
for the case when the base graph $G$ is simple. 
What can be said for base graphs that are not simple? 

If $G$ has multiple edges (but no loops), 
then essentially the same result holds. 
The only difference is in the definition of a \emph{covering map} $T \to G$. 
In the case of simple graphs, one can simply say that a covering map is 
a mapping $V(T) \to V(G)$ such that the neighbors of a vertex $v$ 
are mapped bijectively to the neighbors of the image of $v$. 
When we have multiple edges, we also need to define the image of an edge: 
a covering map is a mapping $V(T) \to V(G)$ and a mapping $E(T) \to E(G)$ 
such that edges incident to a vertex $v$ are mapped bijectively 
to edges incident to the image of $v$. 
Once we know Theorem \ref{thm:general} for simple base graphs, 
it easily follows that it also holds when the base graph $G$ has multiple edges: 
simply take a finite simple graph $G_2$ that covers $G$; 
then the $G_2$-version of \eqref{eq:general} implies the $G$-version. 
(The proof of Theorem \ref{thm:general}
presented in Section \ref{sec:proof} would actually work
for base graphs with multiple edges.) 

As for loops the situation is a bit more complicated. 
In fact, one should distinguish between two kinds of loops. 
Loosely speaking: 
a \emph{full-loop} can be travelled in two directions 
(contributing to the degree of the vertex by $2$ 
and adding a free factor $\IZ$ to the fundamental group) 
while for a \emph{half-loop} there is just one way of ``going around'' 
(contributing to the degree by only $1$ 
and adding a free factor $\IZ_2$ to the fundamental group). 
For our purposes the difference between them is how they behave under coverings. 
In short, an edge ``double-covers'' a half-loop 
while two parallel edges are needed to double-cover a full-loop. 
We should define covering maps rigorously 
for graphs containing full-loops, half-loops, multiple edges. 
Then this could lead to a version of \eqref{eq:general} for arbitrary base graphs. 
The reason why we do not go into the details here is that, again, 
one can always take a finite simple lift of an arbitrary base graph 
and get a stronger version of the inequality. 

If $G$ has parallel edges 
(multiple edges between two vertices or more than one loops at one vertex), 
then we may choose not to ``distinguish'' some of those parallel edges 
but this would again lead to weaker inequalities. 
Note that in this terminology the original edge-vertex inequality \eqref{eq:original} 
would correspond to the case when the base graph $G$ 
consists of one vertex and $d$ undistinguished half-loops, 
which is the weakest version of \eqref{eq:regular_case} in the $d$-regular case. 

\subsection{Connections to dynamical systems}
\label{sec:dynamical_systems}
These processes can be viewed in the context of ergodic theory. 
An invariant process (as defined in Section \ref{sec:defs}) 
gives rise to a dynamical system over $\Ga$: 
the group $\Ga$ acts by measure-preserving transformations 
on the measurable space $M^S$ equipped with a probability measure 
(the distribution of the invariant process). 
An IID process simply corresponds to a (generalized) Bernoulli shift. 
Therefore factor of IID processes are factors of Bernoulli shifts. 

In fact, the general edge-vertex inequality \eqref{eq:general} 
is related to a result of Lewis Bowen 
saying that the so-called \emph{$f$-invariant} (for actions of the free group $F_r$) 
is non-negative for factors of the Bernoulli shift \cite[Corollary 1.8]{bowen}. 
This is essentially equivalent to Corollary \ref{cor:regular_case} 
in the special case when the base graph $G$ consists of one vertex 
and $r=d/2$ distinguished full-loops. 
See \cite[Section 2.3]{mut_inf} for details. 

%%%%%%%%%%%%%%%%%%%%%%%%%%%%%%%%%%%%%%%%%%%%%%%%%%%%%%%%%%%%%%%%%%%%%%%%%
%%%%%%%%%%%%%%%%%%%%%%%%%%%%%%%%%%%%%%%%%%%%%%%%%%%%%%%%%%%%%%%%%%%%%%%%%
%%%%%%%%%%%%%%%%%%%%%%%%%%%%%%%%%%%%%%%%%%%%%%%%%%%%%%%%%%%%%%%%%%%%%%%%%

\section{New inequalities for $\Aut(T_d)$-factors} \label{sec:3}
\label{sec:new_inequalities}
In the introduction we already demonstrated on a simple example 
how Corollary \ref{cor:regular_case} can be used 
to get new entropy inequalities for $\Aut(T_d)$-factors. 
In this section we describe our general method 
and present further examples.

Suppose that $Y$ is an $\Aut(T_d)$-factor of IID process on $M^{V(T_d)}$. 
Using the extra structure that a covering $\varphi \colon T_d \to G$ gives, 
$Y$ can be turned into a $\Ga_\varphi$-factor in many ways. 
For each $v \in V(G)$ we fix a non-backtracking walk starting at $v$. 
Then for any lift $\hat{v} \in V(T_d)$ of $v$ 
this walk can be lifted to get a path starting at $\hat{v}$. 
Let the endpoint of this path be assigned to $\hat{v}$. 
This assignment yields a mapping $f \colon V(T_d) \to V(T_d)$. 
It is easy to see that $f$ is $\Ga_\varphi$-equivariant, 
and consequently $X_u \defeq Y_{f(u)}$ defines a process $X$ 
that is a $\Ga_\varphi$-factor of IID, 
and hence Corollary \ref{cor:regular_case} can be applied to $X$. 
(The example in the introduction is the special case when $G = K_{d+1}$, 
and for the distinguished vertex $o \in V(G)$ we choose the walk $o$ of length $0$, 
and for any other vertex $v$ we choose the walk $v \to o$ of length $1$.) 

The general construction (where one can choose a finite collection of walks 
for each vertex) is described by the following lemma.
\begin{lemma}
Let $G$ be a finite connected $d$-regular (simple) graph 
and $\varphi \colon T_d \to G$ a covering map. 
Suppose that we have an $\Aut(T_d)$-factor of IID process $Y$ on $M^{V(T_d)}$. 
For any $v \in V(G)$ let us choose a finite collection of 
(non-backtracking) walks on $G$ (each starting at $v$): 
$W_{v,i}$, $1 \leq i \leq k_v$. 

For any lift $\hat{v} \in V(T_d)$ of $v$ 
we lift each $W_{v,i}$ starting at $\hat{v}$. 
Then we consider the endpoints of these $k_v$ paths and 
$X_{\hat{v}}$ is defined to be the $k_v$-tuple of the $Y$-labels of these endpoints. 
It can be seen easily that the obtained process $X$ 
is a $\Ga_\varphi$-factor of the IID process.
(Note that the state space for $X$ is 
$M' = M \cup (M \times M) \cup (M \times M \times M) \cup \ldots$) 
\end{lemma}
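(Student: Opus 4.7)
The plan is to verify three facts in sequence: that the construction is well-defined, that it realizes $X$ as a measurable function of the IID inputs, and finally that the resulting factor map commutes with the $\Gamma_\varphi$-action. The first two are essentially bookkeeping; the substance is in the equivariance.

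First I would invoke the standard unique path-lifting property of the universal covering $\varphi\colon T_d\to G$: for each lift $\hat v$ of $v$ and each walk $W_{v,i}$ starting at $v$, there is a unique lifted walk in $T_d$ starting at $\hat v$. Denote its endpoint by $f_{v,i}(\hat v)\in V(T_d)$. This gives a well-defined map $f_{v,i}\colon\varphi^{-1}(v)\to V(T_d)$ for each $v\in V(G)$ and $1\le i\le k_v$. Taken together with the partition $V(T_d)=\bigsqcup_{v\in V(G)}\varphi^{-1}(v)$, these maps determine, for every $\hat v\in V(T_d)$ with $\varphi(\hat v)=v$, the $k_v$-tuple of endpoints used to define $X_{\hat v}$.

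The crucial step is the equivariance of $f_{v,i}$ under $\Gamma_\varphi$. Let $\gamma\in\Gamma_\varphi$, so $\varphi\circ\gamma=\varphi$. If $\hat v\in\varphi^{-1}(v)$ and $\hat W$ is the lift of $W_{v,i}$ starting at $\hat v$, then $\gamma\cdot\hat W$ is a walk in $T_d$ starting at $\gamma(\hat v)$; applying $\varphi$ to $\gamma\cdot\hat W$ gives $\varphi(\hat W)=W_{v,i}$. By uniqueness of path lifting at $\gamma(\hat v)$, $\gamma\cdot\hat W$ must be the lift of $W_{v,i}$ starting at $\gamma(\hat v)$, so its endpoint is $f_{v,i}(\gamma(\hat v))$. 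Thus $\gamma\bigl(f_{v,i}(\hat v)\bigr)=f_{v,i}(\gamma(\hat v))$.

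Now let $F\colon[0,1]^{V(T_d)}\to M^{V(T_d)}$ be the $\Aut(T_d)$-equivariant factor map that produces $Y$ from an IID input $Z$. Since $\Gamma_\varphi\le\Aut(T_d)$, $F$ is in particular $\Gamma_\varphi$-equivariant. Define $\widetilde F\colon[0,1]^{V(T_d)}\to(M')^{V(T_d)}$ by
\begin{equation*}
\widetilde F(z)_{\hat v}\defeq\bigl(F(z)_{f_{v,1}(\hat v)},\dots,F(z)_{f_{v,k_v}(\hat v)}\bigr),\qquad v=\varphi(\hat v).
\end{equation*}
This is measurable as a countable composition of measurable coordinate projections, so $X\defeq\widetilde F(Z)$ is a measurable function of the IID input. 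Finally, for $\gamma\in\Gamma_\varphi$, using the convention \eqref{eq:action} and the equivariance of $F$ and of $f_{v,i}$:
\begin{equation*}
\widetilde F(\gamma\cdot z)_{\hat v}=\bigl(F(z)_{\gamma^{-1}f_{v,i}(\hat v)}\bigr)_{i}=\bigl(F(z)_{f_{v,i}(\gamma^{-1}\hat v)}\bigr)_i=\widetilde F(z)_{\gamma^{-1}\hat v}=(\gamma\cdot\widetilde F(z))_{\hat v},
\end{equation*}
so $\widetilde F$ is $\Gamma_\varphi$-equivariant. Hence $X$ is a $\Gamma_\varphi$-factor of IID, with values in $M'$.

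I do not expect any serious obstacle: the only delicate point is keeping the direction of the action straight in the equivariance computation, which is handled by the $\varphi\circ\gamma=\varphi$ identity together with uniqueness of path lifting. Everything else is a direct unpacking of the definitions.
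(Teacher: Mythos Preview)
Your proof is correct and is precisely the detailed verification the paper leaves implicit: the paper only remarks ``it can be seen easily'' (and, just before the lemma, sketches the single-walk case by noting that $f$ is $\Gamma_\varphi$-equivariant), while you spell out unique path lifting, the equivariance $\gamma(f_{v,i}(\hat v))=f_{v,i}(\gamma(\hat v))$, and the resulting equivariance of $\widetilde F$. There is nothing to add; your approach is the intended one.
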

If we apply Corollary \ref{cor:regular_case} to this process $X$, 
then we will get an inequality between the entropies of 
various finite subsets of $V(T_d)$ for the original $\Aut(T_d)$-factor of IID process $Y$. 
This works for any choice of a finite $d$-regular base graph $G$ and walks $W_{v,i}$. 
In the remainder of this section we will show a few specific examples. 

To keep our notations simple, in this section 
we will write $\mu_v$ and $\mu_e$ for $\mu^X_v$ and $\mu^X_e$. 
Also, $H(\vertex)$ or $H(\edge)$, and more generally $H(V)$ for some $V \subset V(T_d)$, 
will always refer to the entropy corresponding to the original $\Aut(T_d)$-factor process $Y$. 

\subsection*{Two-vertex base graph, 
Theorem \ref{thm:path-edge} and \ref{thm:flower}}
As we discussed in Section \ref{sec:non-simple} 
the general edge-vertex inequality is true 
even when the base graph $G$ has multiple edges. 
So let $G$ be the graph with two vertices ($u$ and $v$) 
and $d$ multiple edges $e_1, \ldots, e_d$ between them. 
Given a positive integer $i \leq d-1$
the following $i$ walks (of length $1$) are associated to $u$: 
$u \stackrel{e_1}{\longrightarrow} v; \ldots; u \stackrel{e_i}{\longrightarrow} v$; 
while only the zero-length walk $v$ is associated to $v$. 
Then 
$$ H(\mu_u) = H(\floweri); \ H(\mu_v) = H(\vertex); \ 
H(\mu_{e_j}) = \begin{cases}
H(\floweri) & \mbox{if $j \leq i$,}\\
H(\floweriplus) & \mbox{if $j > i$.}
\end{cases} $$
Substituting these into \eqref{eq:regular_case} 
we get the first inequality in Theorem \ref{thm:flower}. 
The second inequality follows easily by induction. 

Next we consider the same base graph with different associated walks. 
Two walks starting at $u$, namely, $u$ and $u \stackrel{e_1}{\longrightarrow} v$; 
and two walks starting at $v$, namely, $v$ and $v \stackrel{e_1}{\longrightarrow} u$. 
It is easy to see that $H(\mu_u) = H(\mu_v)=H(\mu_{e_1}) = H(\edge)$, 
while for $j \geq 2$ we have $H(\mu_{e_j}) = H(\pthree)$, 
and consequently Theorem \ref{thm:path-edge} follows from \eqref{eq:regular_case}. 

\subsection*{Sphere versus vertex, Theorem \ref{thm:sphere}}
For a set $V \subset V(T_d)$ and a non-negative integer $k$ 
let $B_k(V) \defeq \left\{ u \ : \ \dist(u,V) \leq k \right\}$. 
The $k$-ball $B_k(\{o\})$ around some root $o$ will be denoted by $B_k$, 
while $S_k \defeq B_k \setminus B_{k-1} = \left\{ u \ : \ \dist(o,u)=k \right\}$ 
is the sphere of radius $k$. Our goal is to get an inequality 
between $H(S_k)$ and $H(\vertex)$. 

We will need the following auxiliary graph to define our base graph: 
let $T_{d,k}$ denote a finite tree that is isomorphic 
to the subgraph of $T_d$ induced by the $k$-ball $B_k$. 
The vertex set of $T_{d,k}$ can be partitioned into 
levels $0,1, \ldots, k$ (based on the distance to the root), 
level $i>0$ consisting of $d(d-1)^{i-1}$ vertices. 
All vertices have degree $d$ except vertices at level $k$ having degree $1$. 
Any vertex at level $0<i<k$ is connected to 
one vertex at level $i-1$ and $d-1$ vertices at level $i+1$. 

\begin{figure}
\centering
\includegraphics[width=6cm]{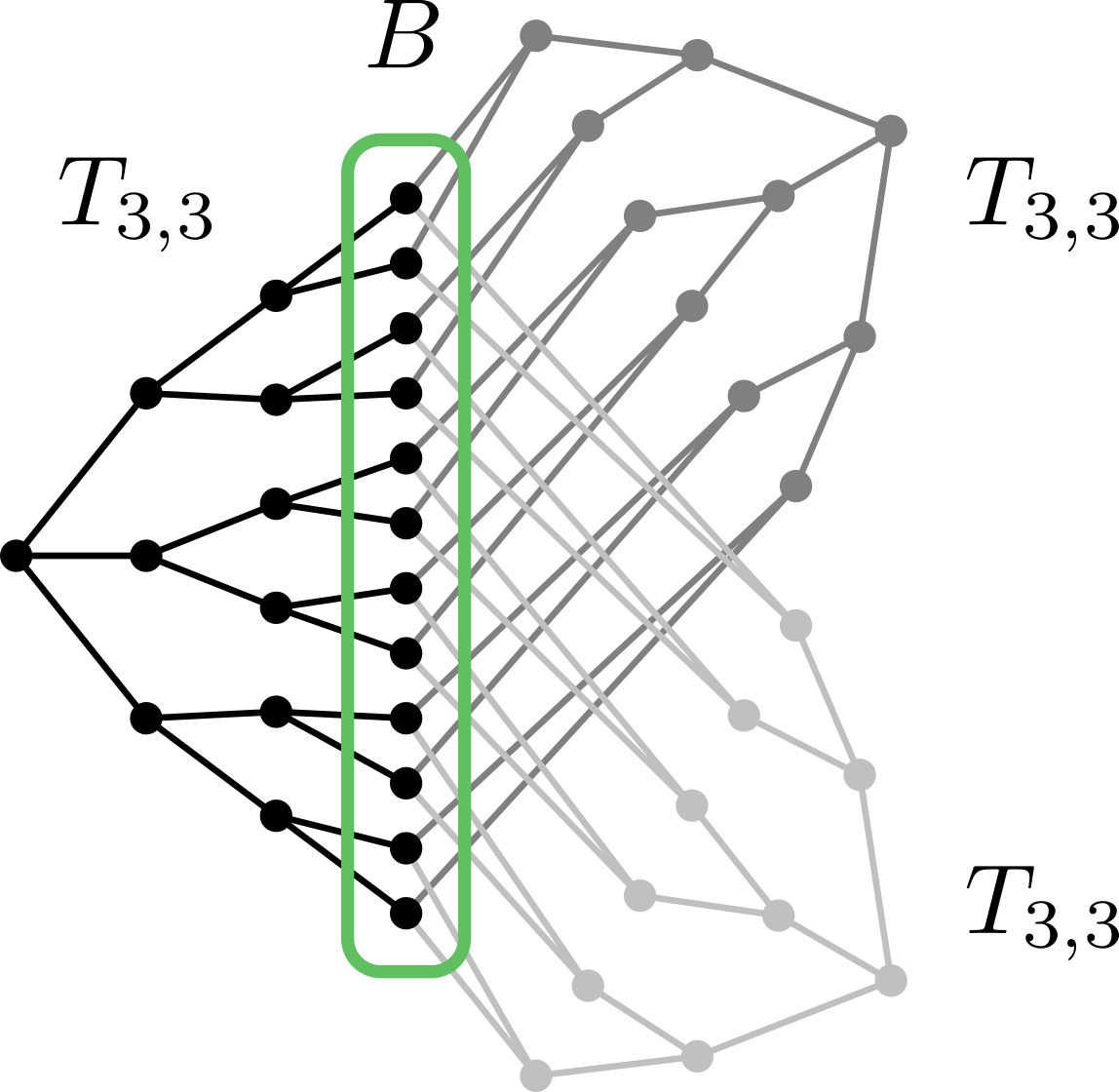}
\caption{Three copies of $T_{3,3}$ glued together along their boundary $B$}
\label{fig3}
\end{figure}
Now we take $d$ copies of $T_{d,k}$ and ``glue'' them along their level-$k$ vertices. 
This way we get a $d$-regular base graph $G$ 
(essentially $d$ balls of radius $k$ with a shared boundary). 
See Figure \ref{fig3} for the case $d=k=3$. 
The level-$k$ vertices 
(that is, vertices on the shared boundary 
that we will denote by $B$) 
only get the zero-length walks. 
Any other vertex $v$ belongs to exactly one copy of $T_{d,k}$. 
If we only use edges in this copy, 
then there is a unique path from $v$ to each vertex in $B$; 
let us associate these $|B|$ paths to $v$. Then we have 
$$ H(\mu_v) = \begin{cases}
H(\vertex) & \mbox{if $v \in B$,}\\
H(S_k) & \mbox{if $v \notin B$;} 
\end{cases} \mbox{ and } H(\mu_e)= H(S_k) \mbox{ for any } e \in E(G). $$ 
Using \eqref{eq:regular_case} we get that 
$$ d \left| E(T_{d,k}) \right| H(S_k) \geq 
\underbrace{(d-1) |B|}_{d(d-1)^k} H(\vertex) + 
d \underbrace{(d-1) \left( \left| V(T_{d,k}) \right| - |B| \right)}
_{| E(T_{d,k}) | - 1} H(S_k) ,$$ 
and Theorem \ref{thm:sphere} follows. 

\subsection*{Blow-ups}
By a \emph{blow-up} of an entropy inequality 
we mean the inequality we get if we replace each $H(V)$ with $H(B_k(V))$ 
for a fixed positive integer $k$. 
It is not hard to show that if a linear entropy inequality 
is true for all $\Aut(T_d)$-factors of IID, 
then the blow-ups of this inequality are 
also true for all $\Aut(T_d)$-factors of IID. 

For example, the blow-ups of the original edge-vertex inequality are: 
\begin{equation} \label{eq:ev_blowup}
\frac{d}{2} H\left( B_k( \edge ) \right) \geq (d-1) H\left( B_k( \vertex ) \right) .
\end{equation}
These blow-ups are closely related to Bowen's definition of the $f$-invariant \cite{f-invariant,bowen}; 
in particular, \eqref{eq:ev_blowup} follows from these papers. 

There is a very short proof for \eqref{eq:ev_blowup} 
using our general method: one can take any base graph $G$ and 
for each vertex take all non-backtracking random walks of length at most $k$. 
It is easy to see that every $H(\mu_v)$ equals $H\left( B_k( \vertex ) \right)$ 
and every $H(\mu_e)$ equals $H\left( B_k( \edge ) \right)$, 
and hence we get \eqref{eq:ev_blowup}. 
Moreover, if an inequality is attainable by our method, 
then so are its blow-ups: one needs to replace each associated walk in $G$ 
with all walks obtained by concatenating this walk and any walk of length at most $k$. 

We also mention that in \cite{ev} the blow-ups of the star-edge inequality \eqref{eq:star-edge} 
were proved for a broader class of invariant processes that were called \emph{typical processes}. 
These blow-up inequalities played a central role in the proof of the main result of that paper. 
(Loosely speaking, a process is typical if it arises 
as a limit of labelings of random $d$-regular graphs. 
Their significance lies in the fact that many questions about 
random regular graphs can be studied through typical processes.
It would be very interesting to know 
whether our new inequalities are also true for this broader class.) 

\subsection*{Mutual information decay, Theorem \ref{thm:mut_info}}
As we pointed out in the introduction, 
Theorem \ref{thm:mut_info} was already proved 
in an earlier paper \cite{mut_inf} of the second and third author. 
Next we show how this inequality follows easily from Corollary \ref{cor:regular_case}. 

We need to define the base graph $G$ 
slightly differently for odd and even $k$. 
For an odd distance $k=2l+1$ let us take 
two copies of $T_{d,l}$ and add edges between their boundaries 
(that is, their level-$l$ parts) in such a way 
that the obtained graph $G$ is $d$-regular. 
Figure \ref{fig4} shows the base graph for the case when $d=4; k=5; l=2$. 
\begin{figure}
\centering
\includegraphics[width=8cm]{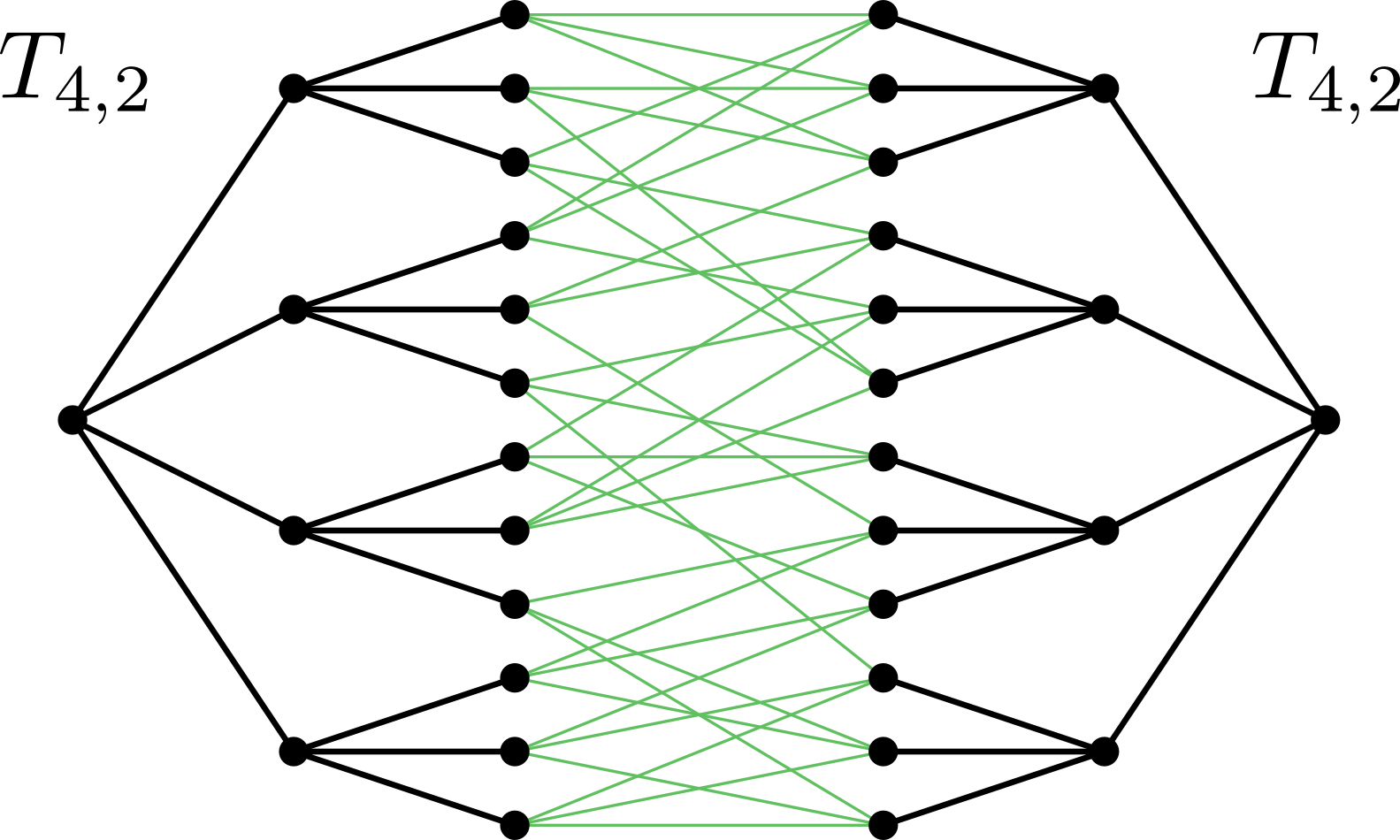}
\caption{Two disjoint copies of $T_{4,2}$ with additional edges between the boundaries}
\label{fig4}
\end{figure}

As for the case when $k=2l$ is even, 
one needs to connect the boundaries of a $T_{d,l}$ and a $T_{d,l-1}$. 
Their boundaries are not of the same size, though, 
so we need to take $d-1$ copies of $T_{d,l-1}$ and one copy of $T_{d,l}$. 
Then we can add edges connecting the boundary vertices of $T_{d,l}$ to 
the boundary vertices of the copies of $T_{d,l-1}$ in such a way that 
the obtained graph $G$ is $d$-regular. 

In both cases we have one walk associated to each vertex of $G$: 
the unique path going to the root inside that copy.
For all $v \in V(G)$ and for all original edges $e$ (going inside a copy) 
we have $H(\mu_v) = H(\mu_e) = H(\vertex)$. 
As for additional edges $e$ (going between the boundaries of different copies), 
$\mu_e$ is the joint distribution of $(Y_u,Y_v)$ 
for vertices $u,v$ at distance $k$. 
Substituting these into \eqref{eq:regular_case} 
leads to Theorem \ref{thm:mut_info}. 
The calculations are straightforward, we include the odd case $k=2l+1$ here. 
Let $B$ denote the boundary of $T_{d,l}$; then 
$$ 2 \left| E(T_{d,l}) \right| H(\vertex) 
+ (d-1) |B| H(Y_u,Y_v) \geq 2(d-1) \left| V(T_{d,l}) \right| H(\vertex) .$$ 
Then for the mutual information $I(Y_u;Y_v) \defeq 2 H(\vertex) - H(Y_u, Y_v)$ we have 
$$ \underbrace{(d-1)|B|}_{d(d-1)^l} \frac{ I(Y_u; Y_v) }{ H(\vertex) } 
\leq 2 \left| E(T_{d,l}) \right| + 2(d-1) |B| - 2(d-1) \left| V(T_{d,l}) \right| = 2 .$$

%%%%%%%%%%%%%%%%%%%%%%%%%%%%%%%%%%%%%%%%%%%%%%%%%%%%%%%%%%%%%%%%%%%%%%%%%
%%%%%%%%%%%%%%%%%%%%%%%%%%%%%%%%%%%%%%%%%%%%%%%%%%%%%%%%%%%%%%%%%%%%%%%%%
%%%%%%%%%%%%%%%%%%%%%%%%%%%%%%%%%%%%%%%%%%%%%%%%%%%%%%%%%%%%%%%%%%%%%%%%%

\section{Sharpness, comparisons, applications}
\label{sec:sharpness}

\subsection{Sharpness}
All the inequalities stated in this paper for $\Aut(T_d)$-factors 
(Theorem \ref{thm:mut_info}--\ref{thm:flower}) 
are sharp in the following sense. Given a linear entropy inequality 
it is natural to normalize it by dividing both sides by the entropy of a vertex. 
We claim that there exist $\Aut(T_d)$-factor of IID processes for which 
the two sides of the inequality are arbitrarily close to each other (after normalization). 
In fact, for each inequality the same examples can be used to demonstrate the sharpness. 
These examples were already presented in \cite{mut_inf} 
to show that the upper bound for the normalized mutual information is sharp. 
For the sake of completeness we briefly recall these examples. 

The idea is very simple: given IID labels at the vertices, 
let the factor process ``list'' all the labels 
within some large distance $R$ at any given vertex. 
One needs to be careful since listing 
the labels should be done in an $\Aut(T_d)$-invariant way. 
One possibility is to use the following lemma. 
\begin{lemma} \cite[Lemma 5.2]{mut_inf}
For any positive integer $L$ there exists a factor of IID coloring  
of the vertices of $T_d$ such that finitely many colors are used 
and vertices of the same color have distance greater than $L$. 
\end{lemma}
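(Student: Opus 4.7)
The plan is to construct a factor-of-IID proper coloring of the $L$-power graph $T_d^{(L)}$ via a greedy algorithm with IID labels serving as tie-breaking priorities. Here $T_d^{(L)}$ has vertex set $V(T_d)$ and edges between all pairs at $T_d$-distance at most $L$; by vertex-transitivity of $T_d$ it is $D$-regular, where $D = d \cdot \frac{(d-1)^L - 1}{d-2}$ is the number of vertices within distance $L$ of any given vertex (excluding itself). After assigning IID Uniform$[0,1]$ labels $(Z_v)_{v \in V(T_d)}$, I would define a coloring $c \colon V(T_d) \to \IN$ by the recursive greedy rule: $c(v)$ is the smallest positive integer not in the set $\{ c(u) : u \sim_{T_d^{(L)}} v \text{ and } Z_u > Z_v \}$.

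The main step is to verify that this recursive rule almost surely terminates, i.e., that for every $v$ the set $U_v$ of vertices reachable from $v$ by strictly $Z$-ascending paths in $T_d^{(L)}$ is almost surely finite. I would bound $|U_v|$ above by the total count $\sum_{k \geq 0} X_k$ of strictly ascending simple paths of length $k$ starting at $v$. Since $T_d^{(L)}$ is $D$-regular, there are at most $D^k$ candidate paths of length $k$ starting at $v$, and by exchangeability the probability that the $k+1$ IID labels along such a path are in strictly ascending order equals $1/(k+1)!$. Summing yields $\E |U_v| \leq \sum_{k \geq 0} D^k / (k+1)! = (e^D - 1)/D < \infty$, which forces $|U_v| < \infty$ almost surely.

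Once termination is in hand, the remaining properties follow quickly. Each $c(v)$ is at most $D+1$, because at most $D$ colors can be forbidden by distance-$L$ neighbors, so $c$ uses only finitely many colors. If $c(v) = c(u)$ with $u$ adjacent to $v$ in $T_d^{(L)}$, then whichever of $v, u$ has the smaller label would have had that color forbidden by the other---a contradiction---so same-colored vertices must be at $T_d$-distance greater than $L$. Finally, since the greedy rule is defined purely in terms of the labeled rooted graph around each vertex, the map $(Z_v) \mapsto c$ is $\Aut(T_d)$-equivariant, making $c$ a genuine factor of the IID process.

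The main obstacle is the termination argument. The key insight there is a clash of scales: the number of simple paths of length $k$ in a bounded-degree graph grows at most exponentially in $k$, while the probability that $k+1$ IID labels occur in a prescribed ascending order decays factorially; the factorial wins, guaranteeing summability and thus finiteness of $U_v$. Once this is secured, the rest is routine bookkeeping.
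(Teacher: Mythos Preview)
Your argument is correct. The greedy coloring with IID priorities is a standard construction, and your termination bound via the exponential-versus-factorial comparison is the right idea; the remaining checks (bounded color range, properness, $\Aut(T_d)$-equivariance, measurability of $c(v)$ as a function of the input labels) all go through as you describe.

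There is nothing to compare against in this paper: the lemma is quoted from \cite{mut_inf} without proof here. For what it is worth, the proof in \cite{mut_inf} follows essentially the same strategy you outline---assign IID labels, color greedily in decreasing label order on the $L$-th power graph, and use the ascending-path count to see that the recursion terminates almost surely---so your approach matches the original source.
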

Let us fix $R$ and pick a very large $L$.
Let $C= \left( C_w \right)_{w \in V(T_d)}$ 
be a factor of IID coloring provided by the lemma above. 
Given a positive integer $N$ 
let $Z_w$, $w \in V(T_d)$ be IID uniform labels from $\{1,2,\ldots,N\}$. 
We set 
$$Y_v = \{(C_w,Z_w)~|~w \in B_R(v) \}.$$
Then $Y_v$ can be viewed as the list of
variables $(C_w, Z_w),~w\in B_R(v)$, ordered by $C_w$ (which
are all different if $L$ is large enough). This is now an $\Aut(T_d)$-invariant description.
Furthermore, conditioned on the coloring process $C$ 
the entropy corresponding to a finite subset $V \subset V(T_d)$ 
is $\left| B_R(V) \right| \log(N)$ provided that $L$ is large enough. 
On the other hand, the contribution of the coloring to the entropies 
does not depend on $N$, so it gets negligible as $N$ goes to infinity. 
One can easily check that if we replace $H(V)$ by $\left| B_R(V) \right|$ 
in any of our inequalities, then the two sides will be 
asymptotically equal as $R \to \infty$, and sharpness follows.

\subsection{Hierarchy of entropy inequalities}
We say that an entropy inequality $A$ is \emph{stronger} than an inequality $B$ 
($A \Rightarrow B$ in notation) if the following is true: 
whenever an $\Aut(T_d)$-invariant process $Y$ (not necessarily factor of IID) satisfies $A$, 
then $Y$ also satisfies $B$. 
There is a nested hierarchy between the blow-ups of the edge-vertex and star-edge inequalities: 
\begin{multline*}
\cdots \Rightarrow \frac{d}{2} H\left( B_{k+1}( \edge ) \right) 
\geq (d-1) H\left( B_{k+1}( \vertex ) \right) \Rightarrow
H\left( B_k( \sta ) \right) \geq \frac{d}{2} H\left( B_k( \edge ) \right) \\
\Rightarrow \frac{d}{2} H\left( B_k( \edge ) \right) 
\geq (d-1) H\left( B_k( \vertex ) \right) \Rightarrow \cdots 
\end{multline*}
In particular, the star-edge inequality \eqref{eq:star-edge} 
is stronger than the edge-vertex inequality \eqref{eq:original}, 
and, in turn, the blow-up \eqref{eq:ev_blowup} (for $k=1$) 
of the edge-vertex inequality implies the star-edge inequality. 

This can be seen using conditional entropies; 
we only include a sketch of the argument. 
For finite sets $U,W \subset V(T_d)$ 
let $H(W | U)$ denote the conditional entropy $H( Y_w, w \in W \ | \ Y_u, u \in U)$. 
We will only use this in the special case when $U \subset W$, 
where we have $H(W|U) = H(W)-H(U)$. 
 
To see that \eqref{eq:star-edge} is stronger than \eqref{eq:original}: 
for any invariant process $Y$ satisfying \eqref{eq:star-edge} we have 
$$ \frac{d}{2} H(\edge) \stackrel{\eqref{eq:star-edge}}{\leq} 
H(\sta) = H(\edge) + H(\sta | \ \edge) 
\leq H(\edge) + (d-1) H(\edge \ | \ \vertex) = d H(\edge) - (d-1) H(\vertex) ,$$
and \eqref{eq:original} follows. 

A similar argument shows that for a process satisfying \eqref{eq:ev_blowup} (for $k=1$) we have 
$$ \frac{2(d-1)}{d} \underbrace{ H( B_1(\vertex) ) }_{H(\smallsta)} 
\stackrel{\eqref{eq:ev_blowup}}{\leq} 
H( B_1(\edge) ) \leq H(\sta) + H(\sta | \ \edge) = 2 H(\sta) - H(\edge) ,$$ 
and \eqref{eq:star-edge} follows.

Similar arguments were known by Bowen in the dynamical system context, 
see \cite[Proposition 5.1]{f-invariant}.

\subsection{Tree-indexed Markov chains}
\label{sec:Markov}
We have already seen that all our new entropy inequalitites are sharp 
but the question remains: how strong are they compared to previously-known ones? 
Next we compare them for a specific class of processes.

An intriguing open problem about factor of IID processes 
is to determine the parameter regime where the \emph{Ising model} on $T_d$ 
can be obtained as a factor of IID process. 
More generally, given a Markov chain indexed by $T_d$ with some transition matrix, 
decide whether the corresponding invariant process is a factor of IID or not. 
(See \cite{lyons,invtree} and references therein.)

Here we focus on obtaining constraints for a Markov chain to be factor of IID. 
Two approaches have been used to show that a tree-indexed Markov chain cannot be factor of IID. 
The correlation bound given in \cite{cordec} 
implies that the spectral radius of the transition matrix 
is at most $1/\sqrt{d-1}$ in the factor of IID case. 
The edge-vertex entropy inequality yields another constraint. 
For the Ising model the former gives a slightly better result. 
There are examples, however, where the latter 
performs significantly better \cite[Theorem 5]{invtree}. 

One might think that the entropy approach can be improved 
by considering the stronger blow-up inequalities described above. 
However, for Markov chains all these blow-ups 
are equivalent to the edge-vertex inequality. 
This is due to the fact that for any connected subset $V \subset V(T_d)$ we have 
$$ H(V) =  H(\vertex) 
+ (|V|-1) \underbrace{ H( \edge \ | \ \vertex ) }_{H\left(\edge\right) - H(\vertex)} 
= (|V|-1) H(\edge) - (|V|-2) H(\vertex) $$ 
because of the Markov property. It follows that all known inequalities 
involving entropies of connected sets are equivalent to the edge-vertex inequality 
for tree-indexed Markov chains. In particular, $H(\sta) \geq (d-1) H(\vertex)$, 
which follows by combining \eqref{eq:original} and \eqref{eq:star-edge}, 
is also equivalent to \eqref{eq:original} for these processes.  

We claim that our new entropy inequalities \eqref{eq:sphere}, 
proved in Theorem \ref{thm:sphere} for $\Aut(T_d)$-factors of IID, 
are stronger than \eqref{eq:original} for tree-indexed Markov chains.  
\begin{proposition}
For tree-indexed Markov chains 
the inequality $H(S_k) \geq (d-1)^k H(\vertex)$ is 
stronger than the edge-vertex inequality \eqref{eq:original} 
and its blow-ups \eqref{eq:ev_blowup} for any given $k$. 
\end{proposition}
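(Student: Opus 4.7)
The plan is to sandwich $H(S_k)$ between the assumed lower bound and a trivial upper bound, and then observe that the resulting estimate collapses exactly onto the edge-vertex inequality. First I would use monotonicity of Shannon entropy ($H(S_k) \leq H(B_k)$ since $S_k \subset B_k$) together with the Markov-chain formula for connected subsets of $T_d$ quoted earlier in the paper. Applied to $B_k$, this formula reads $H(B_k) = N_k H(\edge) - (N_k - 1) H(\vertex)$, with $N_k \defeq |B_k| - 1 = \frac{d\left[(d-1)^k - 1\right]}{d-2}$.

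Combining this with the hypothesis $H(S_k) \geq (d-1)^k H(\vertex)$ yields $\left[(d-1)^k + N_k - 1\right] H(\vertex) \leq N_k H(\edge)$. A direct arithmetic check shows $(d-1)^k + N_k - 1 = \frac{2(d-1)}{d} N_k$, so this is exactly $\frac{d}{2} H(\edge) \geq (d-1) H(\vertex)$, i.e.\ \eqref{eq:original}. Since the paper has already observed that for tree-indexed Markov chains every blow-up \eqref{eq:ev_blowup} is equivalent to \eqref{eq:original}, this single implication handles all the blow-ups simultaneously.

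To justify ``stronger'' in its informative sense I would finish with a Markov chain satisfying \eqref{eq:original} but violating $H(S_k) \geq (d-1)^k H(\vertex)$. The sandwich step $H(S_k) \leq H(B_k)$ is tight only when $H(B_{k-1} \mid S_k) = 0$, i.e.\ only when the interior of the $k$-ball is determined by its sphere, so any non-degenerate chain that saturates \eqref{eq:original} separates the two inequalities. A concrete choice is the reversible symmetric binary Markov chain on $\{0,1\}$ with flip probability $p$ tuned so that $1+h(p) = 2(d-1)/d$, in which the root is plainly not determined by $S_1$. The only non-routine point---and what I would flag as the key insight rather than an obstacle---is the arithmetic identity $(d-1)^k + N_k - 1 = \frac{2(d-1)}{d} N_k$, which makes the sandwich collapse precisely onto \eqref{eq:original}; the rest is bookkeeping.
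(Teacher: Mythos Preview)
Your proof is correct and follows the same route as the paper: relax $H(S_k)$ to $H(B_k)$ via monotonicity, then use the Markov formula for connected sets to see that $H(B_k) \geq (d-1)^k H(\vertex)$ collapses exactly to the edge-vertex inequality (the paper asserts this reduction without writing out the arithmetic identity you verify). Your final paragraph on strict strength goes beyond what the proposition asks: in the paper ``$A$ is stronger than $B$'' is defined to mean only $A \Rightarrow B$, so the separating example is a nice addition but not needed.
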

\begin{proof}
The inequality $H(S_k) \geq (d-1)^k H(\vertex)$ is clearly stronger 
than $H(B_k) \geq (d-1)^k H(\vertex)$. The latter, however, is equivalent to 
\eqref{eq:original} and \eqref{eq:ev_blowup} for tree-indexed Markov chains. 
\end{proof}
Therefore whenever the entropy approach performs better than the correlation bound, 
using Theorem \ref{thm:sphere} for any $k \geq 1$ instead of \eqref{eq:original} 
will give an even better result.

As for which $k$ we get the strongest inequality (for Markov chains), 
we do not have a complete answer. 
We can prove that for $k=2$ the theorem is stronger than for $k=1$, 
but we do not know if larger $k$ always provides stronger inequality in Theorem \ref{thm:sphere}. 

%$k=2$ is stronger than $k=1$ but we do not know if this is true for larger $k$

%%%%%%%%%%%%%%%%%%%%%%%%%%%%%%%%%%%%%%%%%%%%%%%%%%%%%%%%%%%%%%%%%%%%%%%%%
%%%%%%%%%%%%%%%%%%%%%%%%%%%%%%%%%%%%%%%%%%%%%%%%%%%%%%%%%%%%%%%%%%%%%%%%%
%%%%%%%%%%%%%%%%%%%%%%%%%%%%%%%%%%%%%%%%%%%%%%%%%%%%%%%%%%%%%%%%%%%%%%%%%

\section{Proof of the general edge-vertex inequality} 
\label{sec:proof}

To prove the original edge-vertex inequality \eqref{eq:original} 
one needs to count colorings with given ``local statistics'' 
on random $d$-regular graphs \cite{invtree,mustazee}. 
In order to obtain Theorem \ref{thm:general} we will generalize 
this argument for random lifts of a finite base graph $G$. 

Let us fix a finite connected simple graph $G$ and 
a covering map $\varphi \colon T \to G$ for the universal covering tree $T$. 
By $\Ga = \Ga_{\varphi}$ we denote the group of covering transformations of $T$. 
We will consider finite lifts $\hat{G}$ of $G$ and
colorings of the vertices of $\hat{G}$. 

\begin{definition} \label{def:stat} 
Let $\hat{G}$ be an $N$-fold lift of $G$. 
That is, we have a (deterministic) graph $\hat{G}$ 
and a covering $\hat{G} \to G$ such that 
every vertex/edge has exactly $N$ \emph{lifts} (i.e.~pre-images under the covering map). 
Suppose that $c \colon V(\hat{G}) \to M$ is 
a (deterministic) coloring for some finite set $M$ of colors. 

By the \emph{local statistics} of the coloring $c$ 
we mean the following distributions: 
given a vertex $v$ (or an edge $e$) of $G$, let $\mu_v^c$ (or $\mu_e^c$) be 
the ``empirical distribution'' of the colors of the $N$ lifts of $v$ (or $e$).
More precisely, for $v \in V(G)$ let $\mu_v^c$ be 
the distribution of $c( \hat{v} )$, 
where $\hat{v} \in V( \hat{G} )$ is chosen uniformly at random among the lifts of $v$. 
Similarly, for $e=(u,v) \in E(G)$ let $\mu_e^c$ denote 
the joint distribution of $\left( c( \hat{u} ) , c( \hat{v} ) \right)$, 
where $\hat{e} = (\hat{u}, \hat{v}) \in E( \hat{G} )$ is 
chosen uniformly at random among the lifts of $e$. 

Note that $\mu_e^c$ is a probability distribution on $M \times M$ 
with the two marginals being $\mu_u^c$ and $\mu_v^c$. 
Also, all the probabilities occuring in these distributions 
are multiples of $1/N$. 
\end{definition}

From this point on $\eps = \eps(N)$ will denote a positive quantity 
that slowly converges to $0$ as $N \to \infty$. 
To be more specific, let $\eps = C/ \log N$, 
where $C$ does not depend on $N$, but it might depend on the base graph $G$, 
the size of the state space $M$, and the radius $R$ of the factor process. 
Note that $C$ might be different at each occurence of $\eps$. 
The proof will have the following ingredients. 
(Some of the notions used here will be defined later.) 
\begin{enumerate}[a)]
%\item Every $\Ga$-factor of IID is the weak limit of finite-radius $\Ga$-factors, 
%and hence it is enough to prove the theorem for those. 
\item It holds with high probability that 
the random $N$-fold lift of a finite graph $G$ has large \emph{essential girth}, 
that is, the number of short cycles is small compared to the number of vertices. 
\item Given any finite-radius $\Ga$-factor of IID process $X$ 
with finite state space $M$ and a finite covering $\hat{G} \to G$ 
the following holds: there exists a deterministic $M$-coloring $c$ of $\hat{G}$ 
such that the local statistics $\mu_v^c$ and $\mu_e^c$ are $\eps$-close to 
$\mu_v^X$ and $\mu_e^X$ provided that the essential girth of $\hat{G}$ is large enough. 
\item Finally, we determine the expected number of $M$-colorings 
with given local statistics on a random $N$-fold lift of $G$. 
\end{enumerate}
The general edge-vertex inequality \eqref{eq:general} 
will follow easily by combining the above ingredients. 

\subsection*{a) Random lifts}

Given a finite simple base graph $G$ and a positive integer $N$, 
a \emph{random $N$-fold lift of $G$}, denoted by $\hat{G}_N$, 
is the following random graph: 
for each $v \in V(G)$ we take $N$ vertices 
$L_v \defeq \left\{ \hat{v}_1, \ldots, \hat{v}_N \right\}$, 
and for each $e = (u,v) \in E(G)$ we take a uniform random perfect matching 
between $L_u$ and $L_v$ (independently for every edge $e$). 
Figure \ref{fig5} shows such a random lift for a base graph with four vertices and five edges. 
\begin{figure}
\centering
\includegraphics[width=6cm]{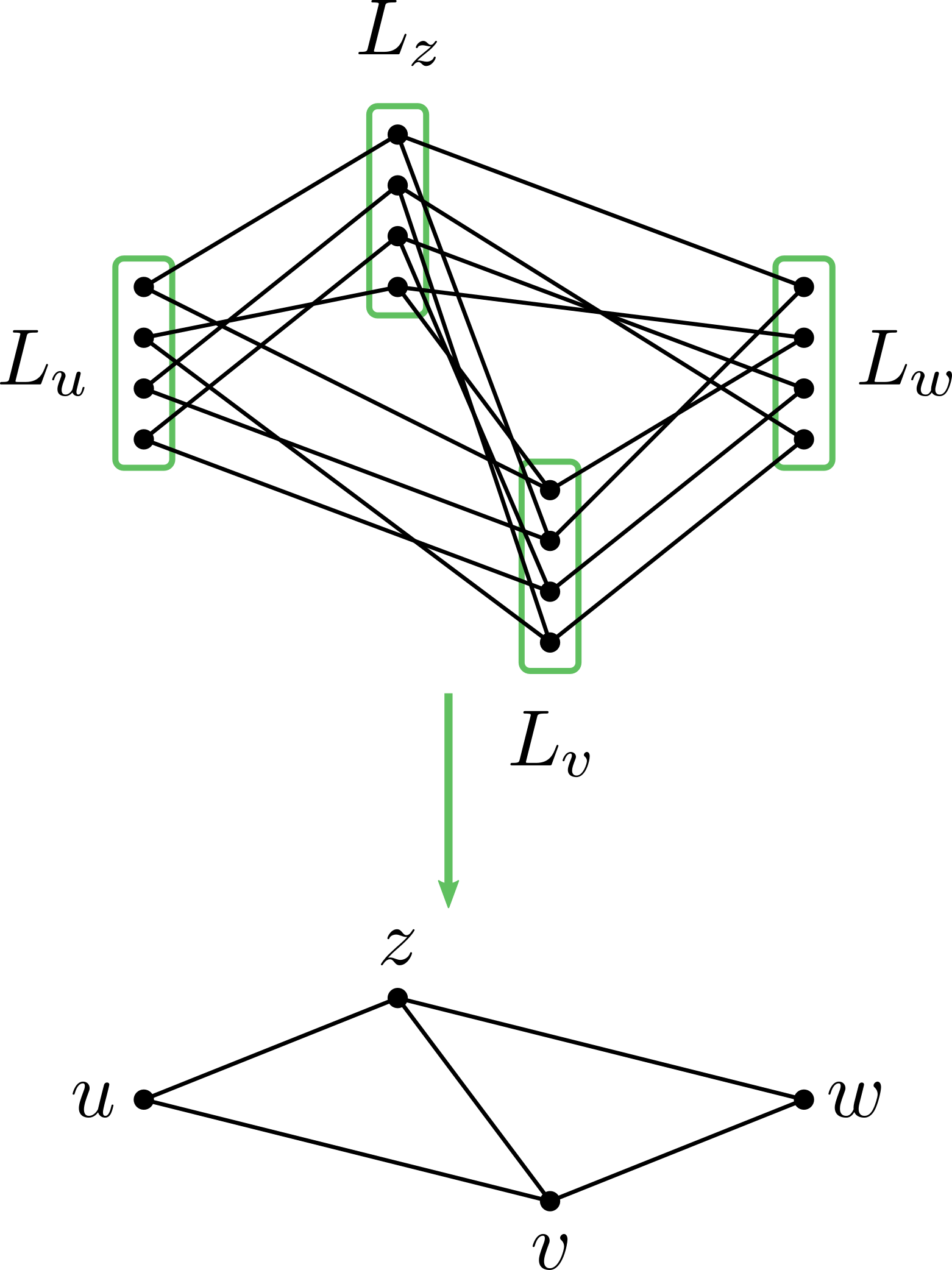}
\caption{The $4$-fold random lift of a finite simple graph.}
\label{fig5}
\end{figure}

The above definition works for base graphs without loops. 
In this paper we do not need to use the notion of random lift for base graphs with loops. 
Let us note nevertheless that random $d$-regular graphs 
can be considered as random lifts of the graph 
with one vertex and $d$ \emph{half-loops}. 

It is well known that a random $N$-fold lift has few short cycles. 
More precisely, \cite[Lemma 2.1]{fortin2012asymptotic} shows 
that for any fixed positive integer $l$ 
the expected number of $l$-cycles in a random $N$-fold lift 
stays bounded as $N \to \infty$. 
Using Markov's inequality this immediately implies that 
with high probability the number of cycles of length at most $l$ 
is small compared to the number of vertices, 
which, in turn, implies that the random lift 
is locally a tree around most vertices. 
The exact statement we will use is the following. 
\begin{lemma} \label{lem:large_girth}
Given any $G$ and any positive integer $R$ 
the random $N$-fold lift of $G$ has the following property 
with probability $1-o(1)$ as $N$ goes to infinity:  
the $R$-neighborhoods of all but at most $\eps N$ edges are trees. 
\end{lemma}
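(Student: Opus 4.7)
The plan is to combine Markov's inequality with the cited bound on the expected number of short cycles in a random lift. The key observation is that whether $B_R(e)$ is a tree depends only on cycles of length bounded by a constant that is independent of $N$, so short-cycle concentration immediately translates into edge-level concentration.

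First I would fix a constant $L = L(G,R)$ that upper-bounds the number of vertices in $B_R(e)$ for \emph{every} edge $e$ of $\hat{G}_N$. Since any covering map is a local isomorphism, the maximum degree $\Delta$ of $\hat{G}_N$ equals that of $G$, so one may take $L := 2\Delta^{R+1}$, a quantity depending only on $G$ and $R$. The crucial observation is that if $B_R(e)$ is not a tree, then it contains some cycle, and every cycle entirely contained in $B_R(e)$ has length at most $|V(B_R(e))| \leq L$. Hence every ``bad'' edge $e$ admits a witnessing cycle $C_e$ of length at most $L$ such that $e \in B_R(V(C_e))$.

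Conversely, each cycle $C$ of length at most $L$ in $\hat{G}_N$ can serve as a witness for at most $|B_R(V(C))| \cdot \Delta \leq L \Delta^{R+2}$ edges, since every edge in $B_R(V(C))$ is incident to a vertex of $B_R(V(C))$. Writing $C_1 := L \Delta^{R+2}$, this gives the deterministic bound
\[
\#\{\text{edges $e$ with $B_R(e)$ not a tree}\} \;\leq\; C_1 \cdot \#\{\text{cycles in } \hat{G}_N \text{ of length} \leq L\}.
\]
By the quoted result of Fortin and Rubin \cite[Lemma 2.1]{fortin2012asymptotic}, for each fixed $\ell$ the expected number of $\ell$-cycles in $\hat{G}_N$ is $O(1)$ as $N \to \infty$; summing over $\ell \leq L$ still gives $O(1)$. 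Therefore the expected number of bad edges is bounded by a constant $K = K(G,R)$.

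Finally I would apply Markov's inequality: with $\eps = C/\log N$,
\[
\Pr\!\left(\#\{\text{bad edges}\} > \eps N\right) \;\leq\; \frac{K}{\eps N} \;=\; O\!\left(\frac{\log N}{N}\right) \;=\; o(1),
\]
which is exactly the claimed statement. I do not foresee any serious obstacle; the only substantive step is the bounded-length observation for witnessing cycles, which follows from the trivial ball-size bound. The rest is a black-box application of the random-lift short-cycle estimate and Markov.
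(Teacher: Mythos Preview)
Your proposal is correct and follows exactly the approach sketched in the paper: bound the expected number of cycles of length at most some $L=L(G,R)$ via the cited random-lift estimate, note that each such cycle can spoil only $O_{G,R}(1)$ edges, and conclude via Markov's inequality. The paper in fact gives only this one-paragraph sketch and states the lemma without further proof, so you have simply filled in the details (the minor bookkeeping about whether $e\in B_R(V(C))$ or $B_{R+1}(V(C))$ only shifts constants and is harmless).
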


\subsection*{b) Projecting finite-radius factors onto large-girth graphs}
\label{sec:projecting}
The content of this section can be found in \cite[Section 2.1]{mustazee} 
for the $\Aut(T_d)$-invariant case. The following is a straightforward 
adaptation for our setting. 

Suppose that we have a finite-radius $\Ga$-factor of IID process with radius $R$ 
and let $F \colon [0,1]^{V(T)} \to M^{V(T)}$ be the corresponding $\Ga$-factor mapping. 
(See Section \ref{sec:defs} and \ref{sec:block_factors} for definitions.) 
Next we explain how one can ``project'' such a process onto finite lifts of $G$. 

Let $\hat{G}$ be a fixed (deterministic) lift of $G$. 
We call a vertex/edge of $\hat{G}$ \emph{$R$-nice} if its $R$-neighborhood is a tree. 
By the \emph{type} of a vertex $\hat{v} \in V(\hat{G})$ 
we mean its image $v \in V(G)$ under the covering map. 
Similarly, we can talk about the type of a vertex of the universal cover $T$. 
 
Given an $R$-nice vertex $\hat{v} \in V(\hat{G})$ 
and an arbitrary vertex $\bar{v} \in V(T)$ with the same type $v \in V(G)$, 
their $R$-neighborhoods are clearly isomorphic. 
Moreover, there is a unique isomorphism between these neighborhoods 
that preserves the vertex types. In what follows we will use this unique 
isomorphism to identify these neighborhoods. 

Now suppose that $[0,1]$ labels are assigned to the vertices of $\hat{G}$. 
We will refer to these labels as \emph{input labels}. 
Depending on these input labels 
we assign a state (i.e.~an element from $M$) to each vertex $\hat{v} \in V(\hat{G})$, 
that is, we define a $[0,1]^{V(\hat{G})} \times V(\hat{G}) \to M$ mapping. 
%or equivalently a $[0,1]^{V(\hat{G})} \to M^{V(\hat{G})}$ mapping. 
We pick an arbitrary fixed state $m_0 \in M$. 
If $\hat{v}$ is not $R$-nice, we assign $m_0$ to $\hat{v}$. 
If $\hat{v}$ is $R$-nice, then we can ``pretend'' that 
we are at a vertex $\bar{v}$ of the universal cover $T$: 
we copy the input labels onto the $R$-neighborhood of $\bar{v}$ 
and apply the function $f_{\bar{v}} \defeq \pi_{\bar{v}} \circ F \colon [0,1]^{V(T)} \to M$; 
the value of $f_{\bar{v}}$ gets assigned to $\hat{v}$. 
(Recall that $\pi_{\bar{v}}$ denotes the coordinate projection $M^{V(T)} \to M$ 
corresponding to the vertex $\bar{v}$.)

For any $\Ga$-factor process $X$ with finite radius $R$ 
and for any finite cover $\hat{G}$ of $G$ 
we described a mapping $[0,1]^{V(\hat{G})} \times V(\hat{G}) \to M$. 
If we choose the input labels randomly (IID and uniform $[0,1]$), 
then we get a random function $c \colon V(\hat{G}) \to M$. 
We will think of $c$ as a random $M$-coloring of the vertices of $\hat{G}$ 
that depends deterministically on the IID input labels. 
It is easy to see that this random coloring has the following properties. 
\begin{itemize}
\item The distribution of the random color of an $R$-nice vertex of type $v$ 
is $\mu_v^X$. Similarly, for an $R$-nice edge $\hat{e}$ the joint distribution 
of the colors on the endpoints of $\hat{e}$ is $\mu_e^X$ for the corresponding $e \in E(G)$. 
(See Theorem \ref{thm:general} for the definition of $\mu^X_v$ and $\mu^X_e$.) 
\item The color of a vertex depends only on the input labels in its $R$-neighborhood. 
That is, if we change the labels outside its $R$-neighborhood, its color remains the same. 
\end{itemize}

From now on we will assume that all but at most $\eps N$ edges of $\hat{G}$ are $R$-nice. 
Definition \ref{def:stat} defines the local statistics $\mu_v^c$ and $\mu_e^c$ 
of a deterministic coloring $c \colon V(\hat{G}) \to M$. 
Here we have a random coloring $c$, therefore $\mu_v^c$ and $\mu_e^c$ 
are random measures depending on the input labels. 
Taking expectation (with respect to the input labels) 
we get the measures $\E \mu_v^c$ and $\E \mu_e^c$. 
We claim that $\E \mu_e^c$ is $\eps$-close to $\mu_e^X$ 
in total variation distance for each $e \in E(G)$. 
This follows from the fact that the color pair of an $R$-nice lift of $e$ 
has distribution $\mu_e^X$ and that at most $\eps N$ edges are not $R$-nice 
among the $N$ lifts of $e$. 

Our goal is to show the existence of a deterministic coloring $c \colon V(\hat{G}) \to M$ 
with the property that $\mu_e^c$ is $\eps$-close to $\mu_e^X$ for each $e \in E(G)$. 
At this point we have a random coloring for which this is true in expectation. 
We will use the following form of the Azuma--Hoeffding inequality 
to show that the local statistics of our random coloring 
are concentrated around their expectations. 
\begin{lemma}
Let $(\Omega^n,\nu^n)$ be a product probability space. 
For a Lipschitz continuous function $f \colon \Omega^n \to \IR$ 
with Lipschitz constant $K$ (w.r.t.~the Hamming distance on $\Omega^n$) we have 
\begin{equation} \label{eq:azuma} 
\nu^n \left( \left\{ \omega \in \Omega^n \, : \, \left| f(\omega) - \E f \right| > \lambda \right\} \right) 
\leq 2 \exp \left( \frac{- \lambda^2 }{2 K^2 n} \right) .
\end{equation}
\end{lemma}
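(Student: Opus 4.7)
The plan is to use the standard Doob-martingale argument. First I would introduce the filtration $\mathcal{F}_i = \sigma(\omega_1, \ldots, \omega_i)$ on the product space $(\Omega^n, \nu^n)$ and form the Doob martingale $f_i \defeq \E[f \mid \mathcal{F}_i]$, with $\mathcal{F}_0$ trivial. Then $f_0 = \E f$, $f_n = f$, and writing $d_i \defeq f_i - f_{i-1}$ the difference $f - \E f$ telescopes to $\sum_{i=1}^n d_i$, a martingale-difference sum.

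The first substantive step is the pointwise bound $|d_i| \leq K$ a.s. Exploiting the product structure of $\nu^n$ and Fubini, one rewrites $d_i(\omega)$ as an integral over $\Omega^{n-i+1}$ of the difference
\begin{equation*}
f(\omega_1,\ldots,\omega_{i-1},\omega_i,x_{i+1},\ldots,x_n) - f(\omega_1,\ldots,\omega_{i-1},x_i,x_{i+1},\ldots,x_n),
\end{equation*}
against the product measure in $(x_i,\ldots,x_n)$. The two arguments of $f$ differ only in the $i$-th coordinate, so the Hamming-Lipschitz hypothesis yields $|f(\cdot) - f(\cdot)| \leq K$ pointwise, and therefore $|d_i| \leq K$.

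With that in hand, Hoeffding's lemma applied conditionally (since $\E[d_i \mid \mathcal{F}_{i-1}] = 0$ and $d_i \in [-K,K]$) gives $\E[e^{t d_i} \mid \mathcal{F}_{i-1}] \leq e^{t^2 K^2 / 2}$. Peeling off the increments one by one via the tower property yields
\begin{equation*}
\E \bigl[ e^{t (f - \E f)} \bigr] \leq e^{n t^2 K^2 / 2}.
\end{equation*}
A Chernoff bound then produces $\nu^n(f - \E f > \lambda) \leq \exp(-t\lambda + n t^2 K^2 / 2)$, which is optimized at $t = \lambda/(n K^2)$ to give $\exp\bigl(-\lambda^2/(2 n K^2)\bigr)$. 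Running the same argument for $-f$ and combining via a union bound gives the stated factor of $2$.

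The only real obstacle is the verification that $|d_i| \leq K$ from the Hamming-Lipschitz assumption; this is where the product structure of $\nu^n$ is used essentially, since it lets us couple $f_i$ and $f_{i-1}$ via replacing the $i$-th coordinate with an independent copy and reduces the bound to comparing two inputs at Hamming distance one. Everything beyond that point is the textbook Azuma computation, with the constant $1/(2K^2 n)$ coming from optimizing the Chernoff exponent.
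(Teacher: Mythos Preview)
Your argument is correct and is the standard Doob-martingale proof of the bounded-differences (McDiarmid/Azuma--Hoeffding) inequality. Note, however, that the paper does not supply a proof of this lemma at all: it simply quotes it as ``the following form of the Azuma--Hoeffding inequality'' and uses it as a black box. So there is nothing in the paper to compare against; your write-up is exactly the textbook derivation one would give if asked to justify the lemma, and the constant $1/(2K^2 n)$ you obtain matches the stated bound.
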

We use this in the following setting: $\Omega = [0,1]$, $\nu$ is the uniform measure on $[0,1]$, 
and $n = |V(\hat{G})| = N |V(G)|$. We will apply \eqref{eq:azuma} to different functions $f$. 
Next we describe these functions. 

Our random coloring $c$ depends on 
the configuration $\omega \in \Omega^n \cong [0,1]^{V(\hat{G})}$ of the input labels. 
For a given edge $e=(v_1,v_2) \in E(G)$ and a given pair of colors $m_1,m_2 \in M$ 
let $f(\omega) \defeq N \mu_e^c( \{ (m_1,m_2) \})$, 
that is, $f$ is the number of lifts of $e=(v_1,v_2)$ 
with the first endpoint having color $m_1$ and the second endpoint having color $m_2$. 
Using the fact that the random color of a vertex depends only on the input labels 
in its $R$-neighborhood, it is easy to see that $f$ is Lipschitz continuous 
with $K = 2 d_{\max}^{R+1}$, where $d_{\max}$ is the maximum degree of the base graph $G$. 

Using \eqref{eq:azuma} with $\lambda = \eps N$ we get that the probability 
that $\mu_e^c( \{ (m_1,m_2) \})$ is not $\eps$-close to $\E \mu_e^c( \{ (m_1,m_2) \})$ 
is very small: at most 2 $\exp(-\eps^2 N )$. 
Recall that $\eps$ can denote any quantity $C/ \log N$ 
where $C$ might depend on $G, M, R$ but not on $N$. 

Using union bound for all $e$ and all pairs $(m_1,m_2)$ 
we get that for large enough $N$ it holds with positive probability  
that $\mu_e^c$ is $\eps$-close to $\E \mu_e^c$ for each $e \in E(G)$. 
We have already seen that $\E \mu_e^c$ is $\eps$-close to $\mu_e^X$, 
thus we have proved the following. 
\begin{lemma} \label{lem:coloring}
Suppose that all but at most $\eps N$ edges of $\hat{G}$ are $R$-nice 
and that $N$ is large enough. 
Then there exists a deterministic coloring $c \colon V(\hat{G}) \to M$ 
such that $\mu_e^c$ is $\eps$-close (say in total variation distance) 
to $\mu_e^X$ for each edge $e \in E(G)$.  
\end{lemma}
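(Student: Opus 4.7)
The plan is a straightforward probabilistic method: produce a random coloring whose local statistics are concentrated around expectations that are already close to those of the factor process $X$, then conclude by averaging that at least one realization meets the claim.

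First, I would take the random $M$-coloring $c$ of $V(\hat{G})$ constructed in the preceding discussion, obtained by feeding IID uniform $[0,1]$ input labels on $V(\hat{G})$ through the projected factor map (and assigning the default state $m_0$ to every vertex that is not $R$-nice). I have already recorded that for each $R$-nice lift of an edge $e \in E(G)$, the joint color distribution on its endpoints equals $\mu_e^X$. Since at most $\eps N$ of the $N$ lifts of $e$ fail to be $R$-nice, averaging over the $N$ lifts shows that $\E \mu_e^c$ is $\eps$-close (in total variation) to $\mu_e^X$. So it remains to find a realization of $c$ whose $\mu_e^c$ are close to their expectations.

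Next, for each edge $e = (v_1, v_2) \in E(G)$ and each pair $(m_1, m_2) \in M \times M$, I would apply the Azuma--Hoeffding bound \eqref{eq:azuma} to the count $f(\omega) \defeq N \mu_e^c(\{(m_1, m_2)\})$ viewed as a function of the input labels $\omega \in [0,1]^{V(\hat{G})}$. Since the color assigned to any vertex depends only on input labels in its $R$-neighborhood, altering a single coordinate of $\omega$ changes at most $O(d_{\max}^{R})$ vertex colors, and consequently changes $f$ by at most $K := 2 d_{\max}^{R+1}$. With $n = N|V(G)|$ and $\lambda = \eps N$, inequality \eqref{eq:azuma} gives the failure probability bound $2\exp\bigl(-\eps^2 N / (2 K^2 |V(G)|)\bigr)$.

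Finally, I would take a union bound over the finitely many choices of $e$ and $(m_1, m_2)$; there are at most $|E(G)| \cdot |M|^2$ of them, a quantity independent of $N$. With the convention $\eps = C/\log N$, the per-event bound decays faster than any polynomial in $N$, so for all sufficiently large $N$ the total failure probability is strictly less than $1$. Hence some realization $\omega$ yields a deterministic coloring $c$ for which $\mu_e^c$ is $\eps$-close to $\E \mu_e^c$, and combining with the earlier comparison gives $\mu_e^c$ $\eps$-close to $\mu_e^X$ as required. The one point that needs care (rather than being a serious obstacle) is tracking how the constant $C$ in $\eps$ propagates through Azuma--Hoeffding and through the union bound; the paper's convention that $C$ may change from line to line while still depending only on $G, M, R$ absorbs this cleanly.
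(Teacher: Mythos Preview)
Your proposal is correct and follows essentially the same approach as the paper: construct the random coloring from IID input labels via the projected factor map, observe that $\E \mu_e^c$ is $\eps$-close to $\mu_e^X$ because at most $\eps N$ lifts of each edge fail to be $R$-nice, then apply Azuma--Hoeffding to each count $N\mu_e^c(\{(m_1,m_2)\})$ with the same Lipschitz constant $K = 2 d_{\max}^{R+1}$ and conclude via a union bound over $E(G)\times M^2$. The details, including the handling of the floating constant $C$ in $\eps$, match the paper's argument almost verbatim.
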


\subsection*{c) The expected number of good colorings}
Next we determine the expected number of colorings 
with prescribed local statistics on random lifts of a base graph. 
These local statistics need to be consistent in the following sense. 
\begin{definition} \label{def:consistent}
For a finite simple graph $G$ and a finite color set $M$ 
by a \emph{consistent collection of distributions} we mean the following: 
a probability distribution $\mu_v$ on $M$ for each $v \in V(G)$ 
and a probability distribution $\mu_e$ on $M \times M$ for each $e \in E(G)$ 
such that the marginals of $\mu_e$ for $e=(u,v)$ are $\mu_u$ and $\mu_v$. 
\end{definition}
\begin{lemma} \label{lem:count}
Let $\mu_v$, $v \in V(G)$, and $\mu_e$, $e \in E(G)$, 
be a consistent collection of distributions as in the definition above. 
Recall that $\hat{G}_N$ denotes the random $N$-fold lift of $G$. 
Then the following formula holds for the expectation (w.r.t.\ $\hat{G}_N$) 
of the number of colorings on $\hat{G}_N$ for which 
the edge-statistics coincide with $\mu_e$: 
\begin{multline} \label{eq:expected_number}
\E_{\hat{G}_N} \left| \left\{ c \colon V( \hat{G}_N ) \to M  : \mu_e^c = \mu_e 
\ \forall e \in E(G) \right\} \right| \\
= \exp \left(  N \left( 
\sum_{e \in E(G)} H(\mu_e) - \sum_{v\in V(G)} (\deg v - 1) H(\mu_v) + o(1) 
\right) \right) \mbox{ as } N \to \infty 
\end{multline}
provided that the probabilities occuring in the discrete distributions $\mu_e$ 
are rational numbers and $N$ is a common multiple of all the denominators 
(otherwise the number of such colorings is clearly $0$). 
\end{lemma}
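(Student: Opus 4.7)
The plan is to evaluate the expectation by linearity, writing
\[
\E_{\hat{G}_N} \bigl|\{c : \mu_e^c = \mu_e \ \forall e \in E(G)\}\bigr| = \sum_c \P_{\hat{G}_N}\bigl(c \text{ has edge statistics } \mu_e \text{ for every } e\bigr),
\]
where the sum ranges over all functions $c \colon V(\hat{G}_N) \to M$. The key point is that in the random lift construction the vertex set $V(\hat{G}_N)$ is deterministic (only the perfect matchings are random), so the coloring $c$ can be fixed before the matchings are drawn; moreover, the matchings on distinct edges of $G$ are mutually independent.

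First, only those $c$ whose color counts on each $L_v$ equal $n_v(m) \defeq N \mu_v(m)$ can contribute, since the prescribed edge statistics force these vertex counts through the consistency of marginals. The number of such colorings is the product of multinomial coefficients $\prod_{v \in V(G)} \binom{N}{n_v(m),\, m \in M}$. Next, for a fixed such $c$ and a fixed edge $e = (u,v) \in E(G)$, the probability that the uniform random bijection $L_u \to L_v$ yields exactly $n_e(m_1,m_2) \defeq N \mu_e(m_1,m_2)$ matched pairs with color pair $(m_1,m_2)$ is computed by a direct count: partition the color-$m_1$ fibre of $c$ in $L_u$ into blocks of sizes $n_e(m_1,\cdot)$, independently partition the color-$m_2$ fibre in $L_v$ into blocks of sizes $n_e(\cdot,m_2)$, then match the blocks. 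This gives
\[
\frac{\prod_{m_1} n_u(m_1)! \cdot \prod_{m_2} n_v(m_2)!}{N! \cdot \prod_{m_1,m_2} n_e(m_1,m_2)!},
\]
and independence across edges turns the joint probability into the product of these one-edge factors.

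Putting the pieces together, $\E|\{c\}|$ becomes an explicit product of factorials. After cancelling the coloring-count denominator against the factors $\prod_m n_v(m)!$ that appear once per incident edge, each $\prod_m n_v(m)!$ is left with net multiplicity $\deg v - 1$ in the numerator. Applying Stirling's formula $\log n! = n \log n - n + O(\log n)$ one obtains $\log \prod_m n_v(m)! = \log N! - N H(\mu_v) + O(\log N)$ and similarly $\log \prod_{m_1,m_2} n_e(m_1,m_2)! = \log N! - N H(\mu_e) + O(\log N)$. The coefficient of $\log N!$ in the total log-expectation is $(|V(G)| - |E(G)|) + \sum_v(\deg v - 1) - |E(G)| = 0$ by the handshake identity $\sum_v \deg v = 2|E(G)|$, leaving precisely $N\bigl(\sum_{e \in E(G)} H(\mu_e) - \sum_{v \in V(G)} (\deg v - 1) H(\mu_v)\bigr) + O(\log N)$, as claimed.

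The main obstacle is really bookkeeping: tracking the exponents of $N!$ and of each $\prod_m n_v(m)!$ carefully enough to see the cancellation, and checking that the $O(\log N)$ Stirling errors remain uniform (which they do, since $M$ and $G$ are fixed so the number of factorials involved is bounded independently of $N$). The divisibility hypothesis guarantees that every $n_v(m)$ and $n_e(m_1,m_2)$ is an integer, so no approximation beyond Stirling is required; without it both sides of the claimed formula are $0$, as noted in the statement.
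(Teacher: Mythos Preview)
Your proof is correct and follows essentially the same approach as the paper: count colorings with the prescribed vertex statistics via multinomials, compute for each edge the probability that a uniform random matching realizes the prescribed pair statistics, multiply using independence, and reduce via Stirling. The only cosmetic difference is that the paper packages the per-edge probability as a separate Claim (proved by a double count of colored perfect matchings) and converts to entropy immediately, whereas you write the explicit factorial ratio and carry out a single Stirling bookkeeping at the end, with the handshake identity making the $\log N!$ cancellation explicit.
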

To prove the above lemma we will adapt the arguments 
in \cite[Section 4]{invtree} for our more general setting. 

Given a discrete distribution $\mu$ on $M$ (set of colors) 
the multinomial coefficients describe the number of 
$M$-colorings of a finite set with color distribution $\mu$. 
Using the Stirling formula it is easy to derive an asymptotic formula 
as the number of elements $N$ goes to infinity: there are 
$$ \exp\left( N \left( H(\mu) + o(1) \right) \right) $$ 
ways to choose the colors of $N$ elements 
in a way that the number of elements with color $m \in M$ is $N \mu( \{m\} )$ 
(provided that these numbers are integers). 

We will also need the following statement which is a slight variant of \cite[Lemma 4.1]{invtree}. 
\begin{claim*}
Let $L_u$ and $L_v$ be disjoint sets of size $N$. 
Fix $M$-colorings of $L_u$ and $L_v$ with color distributions $\mu_u$ and $\mu_v$, respectively. 
Let $\mu_e$ be any distribution on $M \times M$ with marginals $\mu_u$ and $\mu_v$ 
and with the property that all probabilities occuring in $\mu_e$ are multiples of $1/N$. 
Then the probability that a uniform random perfect matching 
between $L_u$ and $L_v$ has color distribution $\mu_e$ is 
\begin{equation} \label{eq:prob}
\exp\left( N \left( H(\mu_e) - H(\mu_u) - H(\mu_v) + o(1) \right) \right) .
\end{equation}
(The color distribution of a matching is 
the distribution of the pair of colors 
on the endpoints of the edges.)
\end{claim*}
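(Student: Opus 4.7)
The plan is to count the number of perfect matchings between $L_u$ and $L_v$ with the prescribed color distribution $\mu_e$ exactly, divide by the total number of matchings (which is $N!$), and then apply Stirling's formula.

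First I would set up notation: write $a_{m_1,m_2} \defeq N\mu_e(\{(m_1,m_2)\})$, $p_{m_1} \defeq N\mu_u(\{m_1\})$, and $q_{m_2} \defeq N\mu_v(\{m_2\})$, all of which are non-negative integers by the hypothesis on $N$. The marginal conditions give $\sum_{m_2} a_{m_1,m_2} = p_{m_1}$ and $\sum_{m_1} a_{m_1,m_2} = q_{m_2}$. To count matchings with color distribution $\mu_e$, I would partition the $p_{m_1}$ vertices of color $m_1$ in $L_u$ into groups of sizes $(a_{m_1,m_2})_{m_2}$, and similarly partition the vertices of color $m_2$ in $L_v$ into groups of sizes $(a_{m_1,m_2})_{m_1}$, then match up the two groups of size $a_{m_1,m_2}$ in $a_{m_1,m_2}!$ ways. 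The factors $a_{m_1,m_2}!$ appear twice in the denominator but once in the numerator, yielding the closed form
\begin{equation*}
\#\{\text{good matchings}\} \;=\; \frac{\prod_{m_1} p_{m_1}! \,\cdot\, \prod_{m_2} q_{m_2}!}{\prod_{m_1,m_2} a_{m_1,m_2}!}.
\end{equation*}

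Next I would divide by $N!$ and take logarithms, applying Stirling in the form $\log k! = k\log k - k + O(\log k)$ to each factorial. Since $|M|$ is fixed and every index ranges over at most $|M|^2$ terms, all the error terms are $O(\log N) = o(N)$. The $N\log N$ terms cancel (using $\sum p_{m_1} = \sum q_{m_2} = \sum a_{m_1,m_2} = N$), as do the linear $-N$ contributions, and the remaining terms assemble as
\begin{equation*}
\sum_{m_1} p_{m_1}\log\tfrac{p_{m_1}}{N} + \sum_{m_2} q_{m_2}\log\tfrac{q_{m_2}}{N} - \sum_{m_1,m_2} a_{m_1,m_2}\log\tfrac{a_{m_1,m_2}}{N} \;=\; N\bigl(H(\mu_e) - H(\mu_u) - H(\mu_v)\bigr),
\end{equation*}
which gives the claimed asymptotic \eqref{eq:prob}.

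I do not expect any serious obstacle here; the argument is essentially bookkeeping with multinomial coefficients and Stirling's formula, exactly paralleling \cite[Lemma 4.1]{invtree}. The one place to be slightly careful is ensuring that the $O(\log N)$ Stirling error for each factorial is genuinely $o(N)$ after summing, which is fine because $|M|$ is a fixed constant independent of $N$, and that one handles degenerate cases (where some $a_{m_1,m_2}$, $p_{m_1}$, or $q_{m_2}$ vanishes) by the usual convention $0\log 0 = 0$ and by omitting the corresponding empty factorials.
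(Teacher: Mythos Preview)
Your argument is correct. The explicit count
\[
\#\{\text{good matchings}\} \;=\; \frac{\prod_{m_1} p_{m_1}! \,\prod_{m_2} q_{m_2}!}{\prod_{m_1,m_2} a_{m_1,m_2}!}
\]
is right, and the Stirling bookkeeping goes through exactly as you describe; the caveat about degenerate zero-counts and the $O(|M|^2\log N)$ total error is all that is needed.

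The paper's proof is organized differently: instead of writing down the closed form and expanding each factorial, it double-counts \emph{colored} perfect matchings (a vertex-coloring of $L_u\cup L_v$ together with a perfect matching) whose edge color distribution is $\mu_e$. Counting first-matching-then-coloring gives $(\#\text{matchings})\cdot\exp(N(H(\mu_e)+o(1)))$, while first-coloring-then-matching gives $\exp(N(H(\mu_u)+H(\mu_v)+o(1)))\cdot(\#\text{good matchings})$; equating and dividing by $(\#\text{matchings})$ yields the claim. This has the advantage of invoking the multinomial asymptotic $\exp(N(H(\mu)+o(1)))$ as a black box three times and avoiding any explicit Stirling manipulation, whereas your approach has the advantage of producing an exact formula before passing to asymptotics. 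The two are equivalent: your ratio $(\#\text{good})/N!$ is precisely the ratio of multinomial coefficients $\binom{N}{(a_{m_1,m_2})}\big/\bigl(\binom{N}{(p_{m_1})}\binom{N}{(q_{m_2})}\bigr)$, which is exactly what the double-counting identity encodes.
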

Before proving this claim we show how Lemma \ref{lem:count} follows. 
First we take disjoint sets $L_v$ of size $N$ for each $v \in V(G)$. 
Then we color each $L_v$ with statistics $\mu_v$. This can be done in 
\begin{equation} \label{eq:count_vertex_coloring}
\exp \left(  N \left( \sum_{v \in V(G)} H(\mu_v) + o(1) \right) \right) 
\end{equation}
different ways. Let us fix such a coloring $c \colon \cup_{v \in V(G) } L_v \to M$. 
To get a random lift of $G$ we need to choose 
a uniform random perfect matching between $L_u$ and $L_v$ 
independently for each edge $e=(u,v)$. 
The probability that this perfect matching has statistics $\mu_e$ 
(for any fixed coloring $c$) is given by the formula \eqref{eq:prob}. 
These probabilities are independent and consequently the probability 
that a fixed coloring $c$ is ``good'' for a random lift 
is the product of \eqref{eq:prob} with $e$ running through $E(G)$. 
To get the expected number of good colorings for a random lift 
we need to multiply this product by \eqref{eq:count_vertex_coloring}, 
and Lemma \ref{lem:count} follows. 

Finally we prove the claim. 
\begin{proof}[Proof of Claim] 
By a \emph{colored perfect matching} between $L_u$ and $L_v$ 
we mean a coloring of the vertices in $L_u \cup L_v$ 
and a perfect matching between $L_u$ and $L_v$. 
There are two different ways to count the number of 
colored perfect matchings with color distribution $\mu_e$: 
\begin{multline*}
( \# \mbox{all perfect matchings} ) \cdot \exp\left( N \left( H(\mu_e) + o(1) \right) \right) \\
= \underbrace{ \left( \# \mbox{vertex colorings} \right)}
_{\exp\left( N \left( H(\mu_u) + H(\mu_v) + o(1) \right) \right)} 
\cdot ( \# \mbox{good perfect matchings for any given vertex-coloring} ) .
\end{multline*}
The claim immediately follows from this equality. 
\end{proof}

\subsection*{Putting the ingredients together}
As we explained in Section \ref{sec:block_factors}, 
an arbitrary $\Ga$-factor of IID process $X$ 
is the weak limit of finite-radius factors. 
Since the entropies $H(\mu^X_v)$ and $H(\mu^X_e)$ are continuous 
under weak convergence, it suffices to prove Theorem \ref{thm:general} 
for finite-radius factors. So let us assume that 
$X$ is a $\Ga$-factor of IID process with some finite radius $R$. 

On a random $N$-fold lift of $G$ let us consider the colorings $c$ 
with the property that $\mu_e^c$ is $\eps$-close to $\mu^X_e$ for all $e \in E(G)$. 
We claim that the expected number of such colorings on a random lift is, 
on the one hand, at least $1-o(1)$, and, on the other hand, 
asymptotically equal to 
\begin{equation} \label{eq:exp}
\exp \left(  N \left( 
\sum_{e \in E(G)} H(\mu^X_e) - \sum_{v\in V(G)} (\deg v - 1) H(\mu^X_v) + o(1) 
\right) \right) \mbox{ as } N \to \infty .
\end{equation}

Combining Lemma \ref{lem:large_girth} and Lemma \ref{lem:coloring} 
implies that at least one such coloring exists 
for a random $N$-fold lift of $G$ with probability $1-o(1)$. 
Therefore the expected number of such colorings is indeed at least $1-o(1)$. 

To get \eqref{eq:exp} we need to apply Lemma \ref{lem:count} 
for all collections of distributions $\mu_v$ and $\mu_e$ 
with the property that they are $\eps$-close to $\mu^X_v$ and $\mu^X_e$, respectively, 
and that all the probabilities occuring are multiples of $1/N$. 
It is easy to see that the total number of such collections is polynomial in $N$. 
We need to take the sum of \eqref{eq:expected_number} for all these collections. 
We can replace the entropies $H(\mu_v)$ and $H(\mu_e)$ with $H(\mu^X_v)$ and $H(\mu^X_e)$ 
at the expense of an $o(1)$ difference as $N \to \infty$. 
We get \eqref{eq:exp} with an extra factor that is polynomial in N 
but that can be also incorporated in the $N \cdot o(1)$ term in the exponent. 

Therefore \eqref{eq:exp} is at least $1-o(1)$ as $N \to \infty$ meaning that the term 
$$ \sum_{e \in E(G)} H(\mu^X_e) - \sum_{v\in V(G)} (\deg v - 1) H(\mu^X_v) $$ 
in the exponent cannot be negative, and this is exactly what we wanted to prove.

%%%%%%%%%%%%%%%%%%%%%%%%%%%%%%%%%%%%%%%%%%%%%%%%%%%%%%%%%%%%%%%%%%%%%%%%%
%%%%%%%%%%%%%%%%%%%%%%%%%%%%%%%%%%%%%%%%%%%%%%%%%%%%%%%%%%%%%%%%%%%%%%%%%
%%%%%%%%%%%%%%%%%%%%%%%%%%%%%%%%%%%%%%%%%%%%%%%%%%%%%%%%%%%%%%%%%%%%%%%%%

%\section{$G$-typical processes} \label{sec:typical}

%%%%%%%%%%%%%%%%%%%%%%%%%%%%%%%%%%%%%%%%%%%%%%%%%%%%%%%%%%%%%%%%%%%%%%%%%
%%%%%%%%%%%%%%%%%%%%%%%%%%%%%%%%%%%%%%%%%%%%%%%%%%%%%%%%%%%%%%%%%%%%%%%%%
%%%%%%%%%%%%%%%%%%%%%%%%%%%%%%%%%%%%%%%%%%%%%%%%%%%%%%%%%%%%%%%%%%%%%%%%%

\bibliographystyle{plain}
\bibliography{refs}

\end{document}